\theoremstyle{plain}
\newtheorem{thm}{Theorem}
\newtheorem{cor}[thm]{Corollary}
\newtheorem{lem}[thm]{Lemma}
\newtheorem{prop}[thm]{Proposition}
\theoremstyle{definition}
\newtheorem{defn}{Definition}
\theoremstyle{definition}
\newtheorem{ex}{Example}
\newtheorem{rem}{Remark}
\def    \bs     {\boldsymbol}
\def    \C      {\mathbb{C}}
\def    \Z      {\mathbb{Z}}
\def    \N      {\mathbb{N}}
\title{Hall-Littlewood symmetric functions via Yamanouchi toppling game}
\author{Robert Cori\footnote{LaBRI Universit\'e Bordeaux 1, Email: \textsf{cori@labri.u-bordeaux.fr}}, Pasquale Petrullo\footnote{Universit\`a degli Studi della Basilicata, Email: \textsf{p.petrullo@gmail.com}} and Domenico Senato\footnote{Universit\`a degli Studi della Basilicata, Email: \textsf{domenico.senato@unibas.it}}}
\date{}
\begin{document}
\maketitle
\begin{abstract}
We define a solitary game, the Yamanouchi toppling game, on any connected graph of $n$ vertices. The game arises from the well-known chip-firing game when the usual relation of equivalence defined on the set of all configurations is replaced by a suitable partial order. The set all firing sequences of length $m$ that the player is allowed to perform  in the Yamanouchi toppling game  is shown to be in bijection with all standard Young tableaux whose shape is a partition of the integer $m$ with at most $n-1$ parts. The set of all configurations that a player can obtain from a starting configuration is encoded in a suitable formal power series. When the graph is the simple path and each monomial of the series is replaced by a suitable Schur polynomial, we prove that such a series reduces to Hall-Littlewod symmetric polynomials. The same series provides a combinatorial description of orthogonal polynomials when the monomials are replaced by products of moments suitably modified.
\end{abstract}
\vspace{0.5cm}
{\bf Keywords:} chip-firing game, Yamanouchi words,
Young tableaux, Hall-Littlewood symmetric
polynomials, orthogonal polynomials.\\\\
{{\bf AMS subject classification:} 05E05, 91A46, 05E18, 33C45.}\\\\
\section{Introduction}
In~\cite{BLS91} A. Bj\"orner, L. Lov\'asz and P. Schor have studied a solitary game called the chip-firing game which is closely related to the sandpile model  of Dhar~\cite{Dhar}. In more recent papers some developments around this game were proposed. Musiker~\cite{Mus} introduced an unexpected relationship with elliptic curves, Norine and Baker~\cite{BN07} by means of an analogous game proposed a Riemann-Roch formula for graphs, for which Cori and Le Borgne~\cite{CLB13} presented a purely combinatorial description. An  algebraic presentation of the theory can be found in \cite{Biggs,dharRuelleVerma,Perk}.

Given a graph $\mathcal{G}$ with vertices $v_1,v_2,\ldots,v_n$, one may consider any array $\alpha=(\alpha_1,\alpha_2,\ldots,\alpha_n)$ of integers as a configuration associating to each vertex $v_i$ the weight $\alpha_i$. Suitable moves, here denoted by
$T_1,T_2,\ldots,T_n$ and called topplings, can be performed in the
game in order to change the starting configuration $\alpha$ into a
new configuration $\beta$. Such moves can be reversed and this
defines a relation of equivalence on $\mathbb{Z}^n$, here called toppling equivalence. The combinatorial interest of such a relation is grounded on its connections with several well-known combinatorial objects such as parking functions and Dick paths. In this paper we investigate more on this combinatorial game, which we refer to as the toppling game, by disclosing a wide range of connections with classical orthogonal polynomials and symmetric functions that we have outlined in \cite{CPS}.

Let $\alpha,\beta\in\Z^n$ and assume that $\beta$ is obtained from $\alpha$ by successively performing topplings
$T_{i_1},T_{i_2},\ldots,T_{i_l}$. Then, we say that $(T_{i_1},T_{i_2},\ldots,T_{i_l})$ is an $\alpha,
\beta$-\textit{toppling sequence} and denote by $\mathcal{T}_{\alpha,\beta}$ the set of all such sequences. Given $(T_{i_1},T_{i_2},\ldots,T_{i_l})\in\mathcal{T}_{\alpha,\beta}$ then it is easily seen that $(T_{i_{w(1)}},T_{i_{w(2)}} \ldots,T_{i_{w(l)}})\in\mathcal{T}_{\alpha,\beta}$ for any permutation $w$ of $1,2\ldots,l$. Moreover, if $1\leq k\leq l$ and if $\alpha^{(k)}$ is the configuration obtained from $\alpha$ via
$(T_{i_1},T_{i_2},\ldots,T_{i_k})$, then it is plain that $\alpha\equiv\alpha^{(k)}$, with $\equiv$ denoting toppling equivalence. We will focus our attention on a restricted class $\mathcal{Y}_{\alpha,\beta}\subseteq\mathcal{T}_{\alpha,\beta}$ of toppling sequences that arise when toppling equivalence $\equiv$ is replaced with a new relation $\leq$ defined on $\Z^n$. A first crucial fact is that $\leq$ is a partial order. Thus, instead of the whole classes of equivalent configurations, one may consider order ideals $\mathcal{H}_\alpha$'s generated by all $\alpha$'s. More concretely, one may also thinks $\mathcal{H}_\alpha$ as the set of all configurations $\beta$'s such that $\mathcal{Y}_{\alpha,\beta}\neq \varnothing$. In particular, one has $(T_{i_1},T_{i_2} \ldots,T_{i_l})\in\mathcal{Y}_{\alpha,\beta}$ if and only the configuration $\alpha^{(k)}$, obtained from $\alpha$ via $(T_{i_1},T_{i_2},\ldots,T_{i_k})$, satisfies $\alpha^{(k)}\leq \alpha$ for all $1\leq k\leq l$. Therefore, an explicit characterization of $\mathcal{Y}_{\alpha,\beta}$ states that $(T_{i_1},T_{i_2},\ldots,T_{i_l})\in\mathcal{Y}_{\alpha,\beta}$ if and only if $i_1i_2\ldots i_l$ is a suitable Yamanouchi word over the alphabet of positive integers. This is why any  sequence in $\mathcal{Y}_{\alpha,\beta}$ will be called a \textit{Yamanouchi toppling  sequence}.

Topplings also acts on the set $\{x^\alpha\}_{\alpha\in \Z^n}$, of monomials of the type $x^\alpha=x_1^{\alpha_1}x_2^{\alpha_2}\cdots x_n^{\alpha_n}$, via $T_i\cdot x^\alpha=x^{T_i(\alpha)}$. In turn, this induces an action of the toppling group $G$ (i.e. the group generated by $T_1,T_2,\ldots,T_n$) on the ring of formal series $\Z[[x_1^{\pm 1},x_2^{\pm 1}, \ldots, x_n^{\pm 1}]]$. From this perspective, we may identify $\mathcal{H}_\alpha$ with the series
\[\mathcal{H}_\alpha(x)=\sum_{\beta\leq\alpha}x^\beta.\]
Since $\leq$ is a partial order, then $\{\mathcal{H}_\alpha(x)\}_{\alpha\in\Z^n}$ is a basis of $\Z[[x_1^{\pm 1},x_2^{\pm 1}, \ldots, x_n^{\pm 1}]]$. By setting $T_{[i]}=T_1T_2\cdots T_i$ we can prove that the operator $\tau$, defined by
\[\tau=\prod_{i=1}^{n-1}\frac{1}{1-T_{[i]}},\]
satisfies
\[\tau\cdot x^\alpha=\mathcal{H}_\alpha(x),\]
for all $\alpha\in\Z^n$. A deformed version of $\tau$, denoted $\hat{\tau}$, arises when elements of type $T_{[i,j]}=T_{[i]}T_{[i+1]}\cdots T_{[j-1]}$ are taken into consideration. More precisely, we set
\[\hat{\tau}=\prod_{1\leq i< j\leq n}\frac{1}{1-T_{[i,j]}},\]
and obtain a further basis $\{\hat{\mathcal{H}}_\alpha(x)\}_{\alpha\in\Z^n}$ satisfying
\[\hat{\mathcal{H}}_\alpha(x)=\hat{\tau}\cdot x^\alpha=\sum_{\beta\leq\alpha}C_{\alpha,\beta}x^\beta,\]
with $C_{\alpha,\beta}$ counting the number of pairwise distinct
decompositions in terms of the generators $T_{[i,j]}$'s of the unique
$g\in G$ such that $g(\alpha)=\beta$. At this point, one may introduce parameters $z_1,z_2,z_3,q$ in order to keep track of the joint distribution of certain statistics
$\ell_1,\ell_2,\ell_3,d$ defined on the set of all decompositions
of any element in the toppling group. This is possible via a further deformation $\hat{\tau}(z_1,z_2,z_3,q)$ of $\hat{\tau}$, which leads to a parametrized version $\hat{\mathcal{H}}_\alpha(z_1,z_2,z_3,q;x)$ of $\hat{\mathcal{H}}_\alpha(x)$. More precisely, we set
\[\hat{\tau}(z_1,z_2,z_3,q)=\prod_{1\leq i<j\leq n}\frac{1-(1-q)T_{[i,j]}z_3z_2^{j-i}z_1^{\binom{j}{2}-\binom{i}{2}}}{1-T_{[i,j]}z_3z_2^{j-i}z_1^{\binom{j}{2}-\binom{i}{2}}},\]
and obtain
\[\hat{\mathcal{H}}_\alpha(z_1,z_2,z_3,q;x)=\hat{\tau}(z_1,z_2,z_3,q)\cdot x^\alpha=\sum_{\beta\leq\alpha}C_{\alpha,\beta}(z_1,z_2,z_3,q;x)x^\beta.\]
Since $\hat{\tau}(z_1,z_2,z_3,q)^{-1}=\hat{\tau}\left(z_1,z_2,(1-q)z_3,\frac{q}{q-1}\right)$,
then an explicit description of the series
\[\hat{\mathcal{K}}_\alpha(z_1,z_2,z_3,q;x)=\hat{\tau}(z_1,z_2,z_3,q)^{-1}\cdot x^\alpha\]
is obtained for free,
\[\hat{\mathcal{K}}_\alpha(z_1,z_2,z_3,q;x)=\sum_{\beta\leq \alpha}
\left(\sum_{\ell_1,\ell_2,\ell_3,d} z_1^{\ell_1}z_2^{\ell_2}z_3^{\ell_3}(1-q)^{\ell_3-d}(-q)^d\right) x^\beta.\]
In the summation above, the values of $\ell_1,\ell_2,\ell_3,d$ range over all pairwise distinct factorizations $g=T_{[i_1,j_1]}T_{[i_2,j_2]}\cdots$ of the unique $g\in G$ such that $g(\alpha)=\beta$. Since $\ell_1\geq \ell_2\geq\ell_3\geq d\geq 0$ then the
coefficient of $x^\beta$ in $\hat{\mathcal{K}}_\alpha(z_1,z_2,z_3,q;x)$, as well as that in
$\hat{\mathcal{H}}_\alpha(z_1,z_2,z_3,q;x)$, is a polynomial with
integer coefficients.

Noteworthy applications of this theory arise when Yamanouchi toppling is performed on the simple path with edges $\{v_1,v_2\},\{v_2,v_3\},\ldots,\{v_{n-1},v_{n}\}$. On one hand, the operator $\hat{\tau}$ reduces to a certain lowering operator arising within theory of symmetric functions and mapping Schur functions into Hall-Littlewood symmetric functions \cite{G92,M95}. As a consequence, the series $\hat{\mathcal{H}}_\alpha(z_1,z_2,z_3,q;x)$ and $\hat{\mathcal{K}}_\alpha(z_1,z_2,z_3,q;x)$ reduce to Hall-Littlewood symmetric functions when $\alpha$ is a partition, when $q,z_1,z_2,z_3$ are suitably specialized, and when each $x^\beta$ is suitably replaced by a Schur function. In turn, this enables us to define an analogue of Hall-Littlewood symmetric functions for any connected graph, thus opening the way to a systematic study of the matter. On the other hand, we can also prove that both $\hat{\mathcal{H}}_\alpha(q,z_1,z_2,z_3;x)$ and $\hat{\mathcal{K}}_\alpha(q,z_1,z_2,z_3;x)$ reduces to the $(n-1)$th orthogonal polynomial $p_{n-1}(t)$, associated with a given linear functional with moments $a_i$'s, whenever $\alpha=(n-1,n-1,\ldots,n-1,0)$ and each $x^\beta$ is replaced by $a_{\beta_1}a_{\beta_2}\cdots a_{\beta_{n-1}}t^{\beta_{n}}$. As an example, Hermite polynomials, Poisson-Charlier polynomials, Jacobi polynomials and any other classical orthogonal basis of the ring of polynomials in a single variable can be obtained by choosing the right sequence of moment (i.e. the right linear functional). Again, an analogue of classical orthogonal polynomials can be defined for any connected graph, with the chip-firing game concurring in giving a new combinatorial ground in common with symmetric functions.
\section{Configurations on graphs, toppling game and Yamanouchi words}
Here and in the following, by a graph $\mathcal{G}$ we will always mean a connected graph $\mathcal{G}=(V,E)$, with set of vertices $V=\{v_1,v_2,\ldots,v_n\}$, and with at most one edge $\{v_i,v_j\}$ for all $1\leq i< j\leq n$. If the edge $\{v_i,v_j\}$ belongs to $E$ then $v_i$ and $v_j$ will be said neighbors. A \textit{configuration} on $\mathcal{G}$ is a map, $\alpha\colon v_i\in V\mapsto \alpha(v_i)\in\mathbb{Z}$, associating to each vertex $v_i$ an
integral weight $\alpha(v_i)$. If we set $\alpha_i=\alpha(v_i)$
then we may identify any configuration $\alpha$ with the array
$(\alpha_1,\alpha_2,\ldots,\alpha_n)$. Henceforth, if $1\leq i\leq n$, then $\epsilon_i=(\delta_{i1},\delta_{i2},\ldots,\delta_{in})$ will denote the configuration associating $v_i$ with $1$ and
$v_j$ with $0$ if $j\neq i$. A \textit{toppling} of the vertex
$v_i$ is a map
$T_i\colon\mathbb{Z}^n\to\mathbb{Z}^n$ defined by
\begin{equation}\label{T}
T_i(\alpha)=\alpha+\Delta_i,
\end{equation}
where
\[\Delta_i=\left(\sum_{\{v_j,v_i\}\in E}\epsilon_j\right)-d_i\epsilon_i,\]
and $d_i=\big|\{v_j\,|\,\{v_i,v_j\}\in E\}\big|$ is the \textit{degree} of $v_i$. Roughly speaking, the map $T_i$ increases by $1$ the weight $\alpha_j$ of each neighbor of $v_i$, and simultaneously decreases by $d_i$ the weight $\alpha_i$. As a consequence, the \textit{size} $|\alpha|=\alpha_1+\alpha_2+\cdots+\alpha_n$ of any $\alpha\in\Z^n$ is preserved by each toppling $T_i$.

One may look at each $T_i$ as a move of a suitable combinatorial game on the graph $\mathcal{G}$, that will be referred to as the toppling game. More precisely, assume that a starting configuration $\alpha=(\alpha_1,\alpha_2,\ldots,\alpha_n)$ is given on $\mathcal{G}$, then label each vertex $v_i$ with its own weight $\alpha_i$. By \lq\lq{firing}\rq\rq the vertex $v_i$ the starting configuration $\alpha$ is changed into a new configuration $\beta=T_i(\alpha)$. A \textit{toppling  sequence} on the graph $\mathcal{G}$ simply is a finite sequence of fired vertices $(v_{i_1},v_{i_2},\ldots,v_{i_l})$ or, equivalently, a finite sequence $(T_{i_1},T_{i_2},\ldots,T_{i_l})$ of moves. We say that $(T_{i_1},T_{i_2},\ldots,T_{i_l})$ is an $\alpha,\beta$-\textit{toppling sequence} to express that $\alpha$ can be changed into $\beta$ by successively performing the corresponding moves, for short $\beta=T_{i_l}T_{i_{l-1}}\cdots T_{i_1}(\alpha)$. It can be shown that a $\alpha,\beta$-toppling  sequence exists if and only if a $\beta,\alpha$-toppling  sequence exists. Then a relation of equivalence, called \textit{toppling equivalence}, can be defined on $\Z^n$ by setting $\alpha\equiv\beta$ if and only if an $\alpha,\beta$-toppling  sequence exists. Note that, the player of an $\alpha,\beta$-toppling  sequence $(T_{i_1},T_{i_2},\ldots,T_{i_l})$ passes through intermediate configurations $\alpha=\alpha^{(0)},\alpha^{(1)},\alpha^{(2)},\ldots,\alpha^{(l)}=\beta$ defined by $\alpha^{(k)}=T_{i_k}T_{i_k-1}\cdots T_{i_1}(\alpha)$ and satisfying $\alpha\equiv\alpha^{(k)}$. We are going to define an analogous game by replacing the toppling equivalence with a different relation on $\mathbb{Z}^n$.

From \eqref{T} one easily recover $T_iT_j=T_jT_i$ for all $1\leq i,j\leq n$. This means that the set $\mathcal{T}_{\alpha,\beta}$ of all $\alpha,\beta$-toppling  sequences is closed under permutation of the topplings involved. In particular, this means that $\alpha\equiv\beta$ if and only  if there exists $a\in \Z^n$ such that $T^a(\alpha)=\beta$, with $T^{a}=T_1^{a_1}T_2^{a_2}\cdots T_{n}^{a_n}$ and $T_i^{a_i}(\alpha)=\alpha+a_i\Delta_i$.
\begin{defn}[Toppling dominance]\label{Def:topp_dom}
Let $\mathcal{G}$ be a graph and let $\alpha,\beta\in\Z^n$. We say that $\alpha$ \textit{dominates} $\beta$ \textit{with respect to} $\mathcal{G}$, written $\beta\leq\alpha$, if and only if
\[\beta=T^\lambda(\alpha) \text{ with }\lambda\in\N^n\text{ and }\lambda_1\geq \lambda_2\geq\ldots\geq\lambda_n.\]
\end{defn}
Now, assume $\beta\leq \alpha$ and assume that a player is asked to perform, if possible, an $\alpha,\beta$-toppling  sequence $(T_{i_1},T_{i_2},\ldots,T_{i_l})$ which obeys the following prescription: for all $1\leq k\leq l$, if $\alpha^{(k)}=T_{i_k}T_{i_{k-1}}\cdots T_{i_1}(\alpha)$ then $\alpha^{(k)}\leq \alpha$. Henceforth, we will denote by $\mathcal{Y}_{\alpha,\beta}$ the set of all toppling sequences in $
\mathcal{T}_{\alpha,\beta}$ that obey such a prescription.
\begin{ex}
Let $\mathcal{G}$ be the complete graph with vertices $v_1,v_2,v_3,v_4,v_5$, then let $\alpha=(5,-3,0,1,-4)$ and $\beta=(-6,-4,4,5,0)$. Straightforward computations show that both
\[(T_1,T_1,T_1,T_1,T_2,T_2,T_3,T_4,T_5)\text{ and }(T_5,T_4,T_3,T_2,T_2,T_1,T_1,T_1,T_1)\]
are $\alpha,\beta$-toppling  sequences. Nevertheless, the former  sequence is in $\mathcal{Y}_{\alpha,\beta}$ instead of the latter which is in $\mathcal{T}_{\alpha,\beta}\setminus\mathcal{Y}_{\alpha,\beta}$. Moreover, the former  sequence is not of minimal length since we also have $(T_1,T_1,T_1,T_2)\in\mathcal{Y}_{\alpha,\beta}$.
\end{ex}
Hence, a first problem the player is going to face off is that of characterizing the set $\mathcal{Y}_{\alpha,\beta}$. A second matter is that of determining those sequences in $\mathcal{Y}_{\alpha,\beta}$ involving the minimum
number of moves. One may easily realize that the $\alpha,\beta$-toppling  sequence $(T_{i_1},T_{i_2},\ldots,T_{i_l})$ is in $\mathcal{Y}_{\alpha,\beta}$ if and only the following condition is satisfied: for all $1\leq k\leq n$ and for all $2\leq i\leq n$, the number of occurrences of $T_i$ in $(T_{i_1},T_{i_2},\ldots,T_{i_k})$ does not exceed the number of occurrences of $T_{i-1}$. So, if $\alpha$ is fixed and if we identify $(T_{i_1},T_{i_2},\ldots,T_{i_l})$ with the word $i_1i_2\ldots i_l$, then $\bigcup_{\beta}\mathcal{Y}_{\alpha,\beta}$ exactly corresponds to the set of all Yamanouchi words over $\{1,2,\ldots,n\}$. Recall that, associated with each Yamanouchi word $w=i_1\,i_2\,\ldots\,i_l$, and hence with each Yamanouchi toppling  sequence $(T_{i_1},T_{i_2},\ldots,T_{i_l})$, there is an integer partition $\lambda(w)=(\lambda_1,\lambda_2,\ldots)$ whose $i$th part $\lambda_i$ equals the number of
occurrences of $i$ in $w$. A suitable filling of the Young diagram
of $\lambda(w)$ yields a coding of $w$ in terms of a standard
Young tableau. More precisely, the tableau associated with
$w$ is the unique tableau of shape $\lambda(w)$ whose $i$th row
stores all $j$'s such that $i_j=i$. This provides a bijection between the set of all Yamanouchi words of $l$ letters and the set of all standard Young tableaux of $l$ boxes \cite{St}. For instance, for the Yamanouchi word $w=1\,1\,2\,1\,3\,2\,4$ we recover a standard Young tableau of shape $\lambda(w)=(3,2,1,1)$,
\[w\mapsto\young(124,36,57).\]
Note that, if the Yamanouchi words $w$ and $w'$ agree up to the order then $\lambda(w)=\lambda(w')$, so that the corresponding toppling  sequences end at the same configuration $\beta$ whenever their starting configuration is the same. However, the converse is not true. In fact, we have already noticed that, if $\mathcal{G}$ is the complete graph with five vertices, the words $w=1\,1\,1\,1\,2\,2\,3\,4\,5$ and $w'=1\,1\,1\,2,$ both change $\alpha=(5,-3,0,1,4)$ into $\beta=(-6,-4,4,5,0)$. However, their corresponding Young tableaux are of different size,
\[w\mapsto\young(1234,56,7,8,9) \text{ and }w'\mapsto\young(123,4).\]
So, in order to get an explicit characterization of each set $\mathcal{Y}_{\alpha,\beta}$ we need a slightly deeper investigation.
\section{The toppling group}
Assume a graph $\mathcal{G}$ is given and denote by $T_1,T_2,\ldots,T_n$ the corresponding toppling maps. The \textit{toppling group} associated with $\mathcal{G}$ is the group $G$ generated by $T_1,T_2,\ldots,T_n$. Since $T_iT_j=T_jT_i$ for all $1\leq i<j\leq n$ then $G$ is commutative. In particular, this says that all $g\in G$ may be expressed in terms of the $T_i$'s as $T^a=T_1^{a_1}T_2^{a_2}\cdots T_{n}^{a_n}$, for a suitable array of integers $a=(a_1,a_2,\ldots,a_n)\in \mathbb{Z}^n$. On the other hand, it is also easy to check that $T_1T_2\cdots T_n(\alpha)=\alpha$ for all $\alpha\in\mathbb{Z}^n$ and for all $\mathcal{G}$. Thus, we have $T_1T_2\cdots T_n=1$, with $1$ denoting the identity of $G$. As a consequence, if $a-b=k(\epsilon_1+\epsilon_2+\cdots+\epsilon_n)$ for some $k\in\mathbb{Z}$, then $T^a=T^b$. This means that each $g\in G$ admits a presentation $T^a$ with $a\in\N^n$. In fact, if $g=T^b$ and if $b\notin\N^n$, then we set $k=\min\{b_1,b_2,\ldots,b_n\}$ and choose $a=b-k(\epsilon_1+\epsilon_2+\cdots+\epsilon_n)$. Now, it is $a\in\N^n$ and $T^a=T^b(T_1T_2\cdots T_n)^{-k}=g$. For instance, if $b=(-3,-1,0,2,0,0,4,0)$ then $k=-3$, $a=(0,2,3,5,3,3,7,3)$ and finally
\[T^b=T^a=T_2^2T_3^3T_4^5T_5^3T_6^3T_7^7T_8^3.\]
In order to show that $T^a=T^b$ if and only if $a-b=k(\epsilon_1+\epsilon_2+\cdots+\epsilon_n)$ with $k\in\mathbb{Z}^n$ we need a preliminary lemma.
\begin{lem}\label{lem}
We have
\[T^a=1 \text{ if and only if }a=k(\epsilon_1+\epsilon_2+\cdots+\epsilon_n).\]
\end{lem}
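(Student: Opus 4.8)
The plan is to prove the equivalence in two directions, the nontrivial one being that $T^a = 1$ forces $a$ to be a multiple of $\mathbf{1} := \epsilon_1+\cdots+\epsilon_n$. The reverse implication is already established in the text ($T_1T_2\cdots T_n = 1$, hence $T^{k\mathbf{1}} = 1$ for all $k \in \Z$). For the forward direction, suppose $T^a = 1$, i.e. $T^a(\alpha) = \alpha$ for all $\alpha \in \Z^n$. Writing out the definition, $T^a(\alpha) = \alpha + \sum_{i=1}^n a_i \Delta_i$, so $T^a = 1$ is equivalent to the single linear relation $\sum_{i=1}^n a_i \Delta_i = 0$ in $\Z^n$. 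Thus the lemma reduces to a purely linear-algebraic statement: the kernel of the matrix $L$ whose $i$th column is $\Delta_i$ (that is, the graph Laplacian, up to sign conventions) is exactly $\Z \cdot \mathbf{1}$.

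The key steps I would carry out are as follows. First, observe that $L$ is (a signed version of) the Laplacian of the connected graph $\mathcal{G}$: its off-diagonal entry in position $(j,i)$ is $1$ if $\{v_i,v_j\} \in E$ and $0$ otherwise, while its diagonal entry is $-d_i$. Second, note $\mathbf{1} \in \ker L$ because each column $\Delta_i$ sums to zero (the toppling $T_i$ preserves size, as remarked in the excerpt), which already shows $\Z\mathbf{1} \subseteq \ker L$. Third, and this is the crux, I would show the reverse inclusion $\ker L \subseteq \Z\mathbf{1}$ using connectedness. Suppose $\sum_i a_i \Delta_i = 0$. Reading off the $j$th coordinate of this vector equation gives, for every $j$,
\[
\sum_{\{v_i,v_j\}\in E} a_i \;=\; d_j\, a_j,
\]
i.e. $a_j$ is the average of the values $a_i$ over the neighbors $v_i$ of $v_j$. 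A standard maximum-principle argument for harmonic functions on a finite connected graph then finishes it: let $M = \max_i a_i$ and pick any vertex $v_j$ attaining it; since $a_j$ is the average of its neighbors' values and none exceeds $M$, all neighbors of $v_j$ must also equal $M$; iterating along paths and using connectedness, $a_i = M$ for all $i$, so $a = M\mathbf{1}$ with $M \in \Z$.

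The main obstacle — really the only point requiring care — is the maximum-principle step, and specifically making sure the averaging identity is applied correctly at a vertex of maximal weight and then propagated through the whole graph via connectedness; one must be slightly careful that an isolated-looking vertex cannot occur (ruled out since $\mathcal{G}$ is connected and $n \geq 1$, so every vertex has $d_j \geq 1$ when $n \geq 2$, and the $n=1$ case is trivial with $L = 0$). An alternative to the maximum principle, if one prefers a more algebraic route, is to cite the well-known fact that the Laplacian of a connected graph has rank $n-1$ with kernel spanned by the all-ones vector; but since we need the \emph{integral} statement (that $a \in \Z^n$ with $a \in \ker_{\Q} L$ lies in $\Z\mathbf{1}$, not merely $\Q\mathbf{1}$), the maximum-principle argument is cleaner because it produces $a = M\mathbf{1}$ directly with $M = \max_i a_i \in \Z$, bypassing any denominators.
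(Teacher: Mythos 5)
Your proof is correct and is essentially the paper's own argument in different clothing: the paper normalizes $a$ so that its minimum entry is $0$, picks a vertex $v_i$ with $b_i=0$, observes that since $v_i$ is never fired yet its weight is unchanged none of its neighbors can be fired either, and then propagates this through the connected graph --- which is exactly your extremum-principle-plus-propagation step, run at the minimum rather than the maximum. Both arguments come down to the same fact that an integer-valued function satisfying the Laplacian averaging identity at every vertex of a connected graph is constant, so there is no substantive difference.
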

\begin{proof}
If $a=k(\epsilon_1+\epsilon_2+\cdots+\epsilon_n)$ then it is obvious that $T^a=1$. Conversely, assume $T^a=1$ and $a\in \N^n$. Let $k=\min\{a_1,a_2,\ldots,a_n\}$ and set $b=a-k(\epsilon_1+\epsilon_2+\cdots+\epsilon_n)$. We obtain $T^a=T^b$ and $\min\{b_1,b_2,\ldots,b_n\}=0$. Assume $b_i=0$ and consider $T^b$ as a toppling sequence. Since $b_i=0$, then $v_i$ is not fired. Moreover, since the toppling sequence does not change the weight of $v_i$, then none of the neighbors of $v_i$ have been fired. This means $b_j=0$ whenever $v_j$ is a neighbor of $v_i$. On the other hand, we may repeat the same reasoning for each neighbor of $v_i$ and, since the graph is connected, in a finite number of steps we will have $b_i=0$ for all $1\leq i\leq n$. This exactly means $a=k(\epsilon_1+\epsilon_2+\cdots+\epsilon_n)$.
\end{proof}
\begin{rem}\label{unique}
Let $a,b \in\Z^n$ and assume  $T^a(\alpha)=T^b(\alpha)$ for some $\alpha$, equivalently, $T^{a-b}(\alpha)=\alpha$. If $c=(a-b)-k(\epsilon_1+\epsilon_2+\cdots+\epsilon_n)$ and if $k=\min\{a_1-b_1,a_2-b_2,\ldots,a_n-b_n\}$, then $T^{a-b}=T^c$ and $\min\{c_1,c_2,\ldots,c_n\}=0$. Now, we may carry out a same reasoning as in the proof of the lemma above obtaining $T^c=1$ and then $T^a=T^b$. That is $T^a(\alpha)=T^b(\alpha)$ if and only if $T^a=T^b$ and this means that if $\alpha\equiv\beta$ then there is a unique $g\in G$ such that $g(\alpha)=\beta$.
\end{rem}
A first consequence of Lemma~\ref{lem} is that the only relations satisfied by the generators of $G$ are $T_iT_j=T_jT_i$ and $T_1T_2\cdots T_n=1$. This means that the group algebra $\mathbb{C}[G]$ of $G$ is isomorphic to the ring
\[\frac{\mathbb{C}[x_1,x_2,\ldots,x_n]}{\langle 1-x_1x_2\cdots x_n\rangle},\]
of polynomials $\mathbb{C}[x_1,x_2,\ldots,x_n]$ modulo the ideal generated by $1-x_1x_2\cdots x_n$.
Moreover this provides an explicit characterization of all distinct presentations of any element in the toppling group $G$ in terms of the generators $T_1,T_2,\ldots,T_n$.
\begin{thm}\label{thm}
For all $a,b\in\N^n$ we have
\[T^a=T^b \text{ if and only if }b-a=k(\epsilon_1+\epsilon_2+\cdots+\epsilon_n),\]
for a suitable $k\in\Z$.
\end{thm}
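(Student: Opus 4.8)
The plan is to deduce Theorem~\ref{thm} from Lemma~\ref{lem} together with the commutativity of $G$, essentially by reducing the equation $T^a=T^b$ to an equation of the form $T^c=1$.

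\medskip

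First I would observe that, since $G$ is commutative and each $T_i$ is invertible, the equation $T^a=T^b$ is equivalent to $T^{a-b}=1$, where $T^{a-b}=T_1^{a_1-b_1}T_2^{a_2-b_2}\cdots T_n^{a_n-b_n}$ makes sense for $a-b\in\Z^n$ because the $T_i$ belong to the group $G$. So the task is to characterize all $c\in\Z^n$ with $T^c=1$, and then specialize to $c=a-b$.

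\medskip

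Next I would handle the fact that Lemma~\ref{lem} as stated was proved for $c\in\N^n$, whereas $c=a-b$ need only lie in $\Z^n$. The remedy is exactly the shifting trick already used in the text and in Remark~\ref{unique}: put $k=\min\{c_1,c_2,\ldots,c_n\}$ and set $c'=c-k(\epsilon_1+\epsilon_2+\cdots+\epsilon_n)$. Then $c'\in\N^n$, and since $T_1T_2\cdots T_n=1$ we have $T^{c'}=T^c\cdot(T_1T_2\cdots T_n)^{-k}=T^c$. Hence $T^c=1$ forces $T^{c'}=1$ with $c'\in\N^n$, so Lemma~\ref{lem} applies and yields $c'=k'(\epsilon_1+\cdots+\epsilon_n)$ for some $k'\in\N$; unwinding the shift gives $c=(k+k')(\epsilon_1+\cdots+\epsilon_n)$. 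Conversely, if $c=m(\epsilon_1+\cdots+\epsilon_n)$ then $T^c=(T_1T_2\cdots T_n)^m=1$. Applying this with $c=a-b$ (equivalently $b-a$, after relabeling $k\mapsto -k$) gives precisely the asserted statement.

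\medskip

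I do not expect a genuine obstacle here: the connectivity/percolation argument that is the real engine has already been carried out in Lemma~\ref{lem}, and Remark~\ref{unique} has even rehearsed the $\Z^n$-to-$\N^n$ reduction in the guise of comparing $T^a(\alpha)$ and $T^b(\alpha)$. The only point that needs a word of care is making sure the reduction is stated for the ``absolute'' identity $T^a=T^b$ rather than merely for the pointwise identity $T^a(\alpha)=T^b(\alpha)$ at a single $\alpha$ --- but this is immediate since $T^a=T^b$ certainly implies $T^{a}(\alpha)=T^b(\alpha)$ for every $\alpha$, and the converse direction is exactly what Remark~\ref{unique} supplies. So the proof is short: invoke commutativity to pass to $T^{a-b}=1$, apply the shifting trick to land in $\N^n$, quote Lemma~\ref{lem}, and shift back.
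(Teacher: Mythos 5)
Your proposal is correct and follows essentially the same route as the paper: both reduce $T^a=T^b$ to an instance of Lemma~\ref{lem} by shifting the relevant exponent vector into $\N^n$ using the relation $T_1T_2\cdots T_n=1$ (the paper does this by forming $\tilde a=h(\epsilon_1+\cdots+\epsilon_n)-a$ with $h=\max_i a_i$ and applying the lemma to $b+\tilde a$, while you subtract $\min_i(a_i-b_i)$ from $a-b$; the two shifts are interchangeable). Your extra care about Lemma~\ref{lem} being proved only for $\N^n$ exponents is a valid and welcome point, but the argument is the same in substance.
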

\begin{proof}
Let $h=\max\{a_1,a_2,\ldots,a_n\}$ and set
$\tilde{a}=h(\epsilon_1+\epsilon_2+\cdots+\epsilon_n)-a$. Clearly
$T^aT^{\tilde{a}}=1$ and, being $T^a=T^b$, also
$T^bT^{\tilde{a}}=1$. By virtue of Lemma~\ref{lem} we deduce
$b+\tilde{a}=j(\epsilon_1+\epsilon_2+\cdots+\epsilon_n)$ and then
$b=a+k(\epsilon_1+\epsilon_2+\cdots+\epsilon_n)$ with $k=j-h$.
\end{proof}
Now, once $g\in G$ and $a\in\N^n$ are chosen such that $g=T^a$, we may set $k=\min\{a_1,a_2,\ldots,a_n\}$ and define
$b=a-k(\epsilon_1+\epsilon_2+\cdots+\epsilon_n)$. Clearly
$T^a=T^b$ and $b$ is the unique element in $\N^n$ of minimal size
with this property. In other words, $T^b$ is the unique
\textit{reduced decomposition} of $g=T^a$. Henceforth, we will denote by $I_n$ the set of all $a\in\mathbb{N}^n$ satisfying $\min\{a_1,a_2,\ldots,a_n\}=0$. Moreover, we denote by $P_n$ the set of all $\lambda=(\lambda_1,\lambda_2,\ldots,\lambda_n)\in I_n$ satisfying $\lambda_1\geq\lambda_2\geq \ldots\geq\lambda_n$, hence $\lambda_n=0$. Note that the map $a\in I_n\mapsto T^a\in G$ is a bijection. Furthermore, if zero entries are ignored then $P_n$ can be identified with the set of all integer partitions with at most $n-1$ parts. Finally, we can characterize each $\mathcal{Y}_{\alpha,\beta}$ in an explicit way.
\begin{thm}\label{thm:Yam_main}
If $\alpha,\beta\in\mathbb{Z}^n$ are such that $\beta\leq \alpha$ then there exists a unique $\lambda\in P_n$ such that
\[T^\lambda(\alpha)=\beta,\]
and all sequences in $\mathcal{Y}_{\alpha,\beta}$ of minimal length are those associated with standard Young tableaux of shape $\lambda$
\end{thm}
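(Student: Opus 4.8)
The plan is to prove the two assertions separately, the second being the heart of the matter. For the first: by Definition~\ref{Def:topp_dom}, $\beta\leq\alpha$ means $\beta=T^\mu(\alpha)$ for some $\mu\in\N^n$ with weakly decreasing entries. If $k=\min\{\mu_1,\dots,\mu_n\}=\mu_n$ and $\lambda=\mu-k(\epsilon_1+\cdots+\epsilon_n)$, then $\lambda\in P_n$ and, since $T_1T_2\cdots T_n=1$, we get $T^\lambda(\alpha)=T^\mu(\alpha)=\beta$. Uniqueness of $\lambda$ is exactly Remark~\ref{unique} (or Theorem~\ref{thm}): any two elements of $P_n$ defining the same group element differ by a multiple of $\epsilon_1+\cdots+\epsilon_n$, and the only such multiple lying in $P_n$ is $0$, since $\lambda_n=\lambda_n'=0$ forces $k=0$.

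For the second assertion I would first reduce the geometric/order condition to a purely combinatorial one on words. Recall from the discussion before the theorem: a toppling sequence $(T_{i_1},\dots,T_{i_l})$ lies in $\mathcal{Y}_{\alpha,\beta}$ if and only if every prefix $i_1i_2\cdots i_k$, read as a multiset, is weakly decreasing in the sense that for each $i\geq 2$ the number of $i$'s does not exceed the number of $(i-1)$'s — i.e.\ $i_1\cdots i_l$ is a Yamanouchi word. Such a sequence ends at $\beta$ precisely when the content (the vector of letter-multiplicities) equals $\lambda$, because the net effect of the sequence depends only on how many times each $T_i$ is applied, and $T^\lambda(\alpha)=\beta$ with $\lambda$ the unique element of $P_n$ furnished by the first part. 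So $\mathcal{Y}_{\alpha,\beta}$ of length exactly $|\lambda|$ corresponds bijectively to Yamanouchi words of content $\lambda$, and this is — via the row-filling bijection recalled in Section~2 (each $i$th row records the positions $j$ with $i_j=i$) — exactly the set of standard Young tableaux of shape $\lambda$.

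It then remains to show that $|\lambda|$ is the minimal length occurring in $\mathcal{Y}_{\alpha,\beta}$; equivalently that any $\alpha,\beta$-toppling sequence has length $\geq|\lambda|$, with equality attained. Attainment is the previous paragraph (any standard Young tableau of shape $\lambda$ yields one). For the lower bound, suppose $(T_{i_1},\dots,T_{i_l})$ is any sequence in $\mathcal{Y}_{\alpha,\beta}$ with content $\mu\in\N^n$; then $T^\mu=T^\lambda$ in $G$, so by Theorem~\ref{thm} $\mu=\lambda+k(\epsilon_1+\cdots+\epsilon_n)$ for some $k\in\Z$, and since $\mu\in\N^n$ we have $k\geq 0$, whence $|\mu|=|\lambda|+kn\geq|\lambda|$. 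Thus every sequence in $\mathcal{Y}_{\alpha,\beta}$ has length $|\mu|\geq|\lambda|$, and those of minimal length are precisely those of content $\lambda$, i.e.\ the ones indexed by standard Young tableaux of shape $\lambda$.

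The main obstacle is not any single step but making the reduction in the middle paragraph airtight: one must justify carefully that the prefix condition defining $\mathcal{Y}_{\alpha,\beta}$ translates exactly into the Yamanouchi (lattice-word) condition, and — crucially — that a Yamanouchi word of content $\lambda$ automatically keeps every intermediate configuration $\leq\alpha$ rather than merely $\equiv\alpha$. This uses that partial sums of a weakly-decreasing-prefix word are themselves weakly decreasing vectors after sorting, so each $\alpha^{(k)}=T^{\nu^{(k)}}(\alpha)$ with $\nu^{(k)}\in\N^n$ reorderable to a partition-type vector, giving $\alpha^{(k)}\leq\alpha$; I would isolate this as the one place needing genuine care.
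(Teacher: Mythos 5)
Your proposal is correct and follows essentially the same route as the paper's own proof: extract the unique $\lambda\in P_n$ as the reduced decomposition of the group element sending $\alpha$ to $\beta$, then identify the minimal-length sequences in $\mathcal{Y}_{\alpha,\beta}$ with Yamanouchi words of content $\lambda$, hence with standard Young tableaux of shape $\lambda$. In fact you supply more detail than the paper does, notably the lower bound $|\mu|=|\lambda|+kn\geq|\lambda|$ via Theorem~\ref{thm} and the verification that weakly decreasing prefix contents give $\alpha^{(k)}\leq\alpha$, both of which the paper leaves implicit.
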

\begin{proof}
If $\beta\leq \alpha$ then there exists a unique $T^\eta\in G$ with $\eta_1\geq\eta_2\geq \ldots\geq\eta_n\geq 0$ and $\beta=T^\eta(\alpha)$. The reduced decomposition of $T^\eta$ is given by $T^\lambda$, where $\lambda=\eta-\eta_n(\epsilon_1+\epsilon_2+\cdots+\epsilon_n)$. Hence, $(T_{i_1},T_{i_2},\ldots,T_{i_l})\in\mathcal{Y}_{\alpha,\beta}$ is an $\alpha,\beta$-Yamanouchi toppling sequence of minimal length if and only if $i_1i_2\ldots i_l$ is Yamanouchi of type $\lambda$, that is if and only if it associated with some standard Young tableaux of shape $\lambda$.
\end{proof}
\begin{cor}
Let $\alpha,\beta\in\Z^n$ and assume $\beta=T^\lambda(\alpha)$ for some $\lambda\in P_n$. Then, all Yamanouchi toppling sequences associated with standard Young tableaux of shape $\mu$ are in $\mathcal{Y}_{\alpha,\beta}$ if and only if $\mu=\lambda+k(\epsilon_1+\epsilon_2+\cdots+\epsilon_n)$, with $k\in\N$.
\end{cor}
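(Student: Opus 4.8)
The plan is to deduce the statement directly from the machinery already set up, namely the prefix (Yamanouchi) description of membership in $\mathcal{Y}_{\alpha,\beta}$ combined with Definition~\ref{Def:topp_dom}, and the classification of presentations $T^a=T^b$ provided by Theorem~\ref{thm} and Remark~\ref{unique}. First I would fix notation: write $\mathbf{1}=\epsilon_1+\epsilon_2+\cdots+\epsilon_n$, let $w=i_1i_2\cdots i_l$ be a Yamanouchi word of type $\mu$, let $(T_{i_1},T_{i_2},\ldots,T_{i_l})$ be the associated toppling sequence, and let $\alpha^{(t)}=T_{i_t}T_{i_{t-1}}\cdots T_{i_1}(\alpha)$ be its intermediate configurations. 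Since topplings commute, $\alpha^{(t)}=T^{a^{(t)}}(\alpha)$ where $a^{(t)}_i$ counts the occurrences of $i$ among $i_1,\ldots,i_t$; in particular $\alpha^{(l)}=T^\mu(\alpha)$.

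For the ``if'' direction I would assume $\mu=\lambda+k\mathbf{1}$ with $k\in\N$. Because $\lambda$ is weakly decreasing with $\lambda_n=0$, the array $\mu$ is a genuine partition, so standard Young tableaux of shape $\mu$ exist and correspond bijectively to Yamanouchi words of type $\mu$. For any such $w$, the relation $T_1T_2\cdots T_n=1$ gives $T^\mu=T^\lambda(T_1T_2\cdots T_n)^k=T^\lambda$, hence $\alpha^{(l)}=T^\mu(\alpha)=T^\lambda(\alpha)=\beta$, so the sequence is an $\alpha,\beta$-toppling sequence. Moreover every prefix $i_1\cdots i_t$ of a Yamanouchi word is again Yamanouchi, so $a^{(t)}_1\geq a^{(t)}_2\geq\cdots\geq a^{(t)}_n\geq 0$, and Definition~\ref{Def:topp_dom} then yields $\alpha^{(t)}=T^{a^{(t)}}(\alpha)\leq\alpha$ for all $t$. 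Thus $(T_{i_1},\ldots,T_{i_l})\in\mathcal{Y}_{\alpha,\beta}$, and since $w$ was arbitrary all sequences attached to tableaux of shape $\mu$ lie in $\mathcal{Y}_{\alpha,\beta}$.

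For the ``only if'' direction I would pick any standard Young tableau of shape $\mu$ with Yamanouchi word $w$ of type $\mu$; by hypothesis its toppling sequence lies in $\mathcal{Y}_{\alpha,\beta}\subseteq\mathcal{T}_{\alpha,\beta}$, so $T^\mu(\alpha)=\beta=T^\lambda(\alpha)$. By Remark~\ref{unique} this forces $T^\mu=T^\lambda$ in $G$, and since $\mu,\lambda\in\N^n$, Theorem~\ref{thm} gives $\mu-\lambda=k\mathbf{1}$ for some $k\in\Z$. Comparing $n$th coordinates and using $\lambda_n=0$, $\mu_n\geq 0$ gives $k=\mu_n\geq 0$, i.e. $k\in\N$, which is the claim.

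Essentially every step here is routine once Theorems~\ref{thm} and~\ref{thm:Yam_main} and Remark~\ref{unique} are in hand; the one place to be careful is the final coordinate comparison, where the normalization $\lambda\in P_n$ (so $\lambda_n=0$) is exactly what upgrades the integer $k$ of Theorem~\ref{thm} to a natural number and rules out ``negative'' shapes. I would also record the degenerate case $\mu=\lambda=0$ (which forces $\beta=\alpha$), where the only sequence is the empty one and the statement holds trivially with $k=0$.
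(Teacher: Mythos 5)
Your proof is correct and follows essentially the same route as the paper: reduce membership in $\mathcal{Y}_{\alpha,\beta}$ to the endpoint condition $T^\mu(\alpha)=\beta$ (using that prefixes of Yamanouchi words are Yamanouchi), then invoke Remark~\ref{unique} and Theorem~\ref{thm} to get $\mu-\lambda=k(\epsilon_1+\cdots+\epsilon_n)$. You are in fact more careful than the paper at the final step, where comparing $n$th coordinates with $\lambda_n=0$ is what upgrades $k\in\Z$ to $k\in\N$.
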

\begin{proof}
Let $i_1i_2\cdots i_l$ be Yamanouhi of type $\mu$. We have $(T_{i_1},T_{i_2},\ldots,T_{i_l})\in\mathcal{Y}_{\alpha,\beta}$ if and only if $\beta=T^\mu(\alpha)=T^\lambda(\alpha)$. Then $T^\mu=T^\lambda$ and Theorem \ref{thm} assures us $\mu=\lambda+k(\epsilon_1+\epsilon_2+\cdots+\epsilon_n)$ for some $k\in\N$.
\end{proof}
\begin{ex}
Let $\mathcal{G}$ denote the complete graph with five vertices, then assign $\alpha=(5,-3,0,1,-4)$ and $\beta=(-6,-4,4,5,0)$. Since we have $\beta=T^\lambda(\alpha)$ for $\lambda=(3,1,0,0,0)$, then the minimum number of moves to pass from $\alpha$ to $\beta$ is $4=3+1$. All $\alpha,\beta$-Yamanouchi toppling sequences of minimal length are
\[(T_1,T_1,T_1,T_2),(T_1,T_1,T_2,T_1),(T_1,T_2,T_1,T_1).\]
They corresponds to the following standard Young tableaux,
\[\young(123,4)\quad\young(124,3)\quad\young(134,2)\]
\end{ex}
In the next section we will focus our attention on the set $\mathcal{H}_\alpha$ of all configurations that can be obtained from a given configuration $\alpha$ by means of any Yamanouchi toppling sequence.
\begin{rem}[On the weight lattice of type $A$]
In recent years, a general and beautiful algebraic theory of orthogonal polynomials have been developed in the framework of Hecke algebras associated with root systems \cite{M03}. For root systems of type $A$ the associated orthogonal polynomials are the well-known Macdonald symmetric polynomials \cite{M95}. By comparing with Kirillov \cite{K}, one may check that for the Weyl group $W=A_{n-1}$ (i.e. the symmetric group $\mathfrak{S}_n$) the set $I_n$ defined above can be identified with the weight lattice $P$. In turn, the set $P_n$ agrees with the set $P^+$ of dominant weights. The subalgebra $\C[P]^{W}$ of the group algebra $\C[P]$ turns out to be isomorphic to the quotient
\[\frac{\mathbb{C}[x_1,x_2,\ldots,x_n]}{\langle 1-x_1x_2\cdots x_n\rangle}.\]
The generators of $\C[P]$ are usually denoted as formal exponentials $e^\lambda$'s, with $\lambda\in P$. This suggests the identification $T_i=e^{\epsilon_i}$. As we will show in the following sections, the toppling game provides an alternative and purely combinatorial way to recover common ground for symmetric and orthogonal polynomials. However, it remains the  interesting question of a deeper understanding of possible connections between the toppling game and the whole theory developed in \cite{M03}.
\end{rem}
\section{Generating series of configurations}
For all $\alpha\in\Z^n$ we set
\[\mathcal{H}_\alpha=\{T^\lambda(\alpha)\,|\,\lambda\in P_n\}=\{\beta\,|\,\beta\leq \alpha\},\]
so that $\mathcal{H}_\alpha$ consists of all configurations that can be obtained from $\alpha$ by performing a Yamanouchi toppling sequence. Since the inverse of an element $T^\lambda$, with $\lambda\in P_n$, cannot be written in general as $T^\mu$ with $\mu\in P_n$, then toppling dominance is not a relation of equivalence.
\begin{prop}\label{P:order}
Toppling dominance is a partial order on $\mathbb{Z}^n$.
\end{prop}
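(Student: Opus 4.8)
The plan is to verify the three defining properties of a partial order—reflexivity, antisymmetry, and transitivity—for the relation $\leq$ on $\Z^n$ given by $\beta \leq \alpha$ iff $\beta = T^\lambda(\alpha)$ for some $\lambda \in \N^n$ with weakly decreasing entries. Reflexivity is immediate: taking $\lambda = 0$ (or $\lambda = (0,0,\ldots,0)$, which is weakly decreasing and in $\N^n$) gives $T^\lambda(\alpha) = \alpha$, so $\alpha \leq \alpha$.

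For transitivity, suppose $\gamma \leq \beta$ and $\beta \leq \alpha$, say $\gamma = T^\mu(\beta)$ and $\beta = T^\lambda(\alpha)$ with $\lambda, \mu$ weakly decreasing in $\N^n$. Then $\gamma = T^{\lambda+\mu}(\alpha)$, and $\lambda + \mu$ is again weakly decreasing with nonnegative entries. Hence $\gamma \leq \alpha$. The only subtlety is to note that, although the $T_i$ commute so $T^\lambda T^\mu = T^{\lambda+\mu}$ at the level of group elements, we are asserting equality of the resulting configurations, which follows directly. So transitivity is essentially free.

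The main obstacle is antisymmetry: assuming $\beta \leq \alpha$ and $\alpha \leq \beta$, one must show $\alpha = \beta$. Write $\beta = T^\lambda(\alpha)$ and $\alpha = T^\mu(\beta)$ with $\lambda, \mu \in \N^n$ both weakly decreasing. Then $T^{\lambda+\mu}(\alpha) = \alpha$, so by Remark~\ref{unique} (equivalently, by Lemma~\ref{lem} applied after reducing), we get $T^{\lambda+\mu} = 1$, and Lemma~\ref{lem} forces $\lambda + \mu = k(\epsilon_1 + \epsilon_2 + \cdots + \epsilon_n)$ for some $k \in \Z$. Since $\lambda,\mu \in \N^n$ we have $k \geq 0$; and since both $\lambda$ and $\mu$ are weakly decreasing, their last entries $\lambda_n, \mu_n$ are the minima of their respective entries, while their sum is the constant vector $(k,\ldots,k)$. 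The key point is that a weakly decreasing vector plus another weakly decreasing vector equals a constant vector only if both summands are themselves constant: indeed, if $\lambda_1 \geq \cdots \geq \lambda_n$ and $\mu_1 \geq \cdots \geq \mu_n$ and $\lambda_i + \mu_i = k$ for all $i$, then $\lambda_i + \mu_i$ is constant while each summand is monotone non-increasing, which forces $\lambda_1 = \cdots = \lambda_n$ and $\mu_1 = \cdots = \mu_n$. Combined with $\lambda, \mu \in \N^n$ having minimal entry $0$ is not assumed here, but we may instead argue directly: $\lambda_1 - \lambda_n \geq 0$ and $\mu_1 - \mu_n \geq 0$, yet $(\lambda_1 - \lambda_n) + (\mu_1 - \mu_n) = k - k = 0$, so both differences vanish. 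Thus $\lambda = \lambda_n(\epsilon_1 + \cdots + \epsilon_n)$, whence $\beta = T^\lambda(\alpha) = (T_1 T_2 \cdots T_n)^{\lambda_n}(\alpha) = \alpha$, using $T_1 T_2 \cdots T_n = 1$. This completes the argument, and I expect the monotonicity observation on the summands to be the crux worth spelling out carefully.
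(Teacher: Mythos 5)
Your proof is correct and follows essentially the same route as the paper: reflexivity and transitivity are immediate, and antisymmetry reduces via Lemma~\ref{lem} to the observation that two weakly decreasing vectors summing to a constant vector must each be constant. Your handling of the case where $\lambda_n$ is not assumed to be $0$ (concluding $T^\lambda=(T_1T_2\cdots T_n)^{\lambda_n}=1$ rather than $\lambda=0$) is a minor but welcome refinement, since Definition~\ref{Def:topp_dom} as stated does not require $\lambda\in P_n$.
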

\begin{proof}
It is plain that $\leq$ is a reflexive and transitive relation. Assume $\alpha\leq \beta$ and $\beta\leq \alpha$, so that $\beta=T^\lambda(\alpha)$ and $\alpha=T^\mu(\beta)$ with $\lambda,\mu\in P_n$. We deduce $\alpha=T^{\mu+\lambda}(\alpha)$ and so, via Lemma \ref{lem}, $\lambda+\mu=k(\epsilon_1+\epsilon_2+\cdots+\epsilon_n)$. By taking into account $\lambda_1\geq\lambda_2\geq \ldots\geq \lambda_n= 0$ and $\mu_1\geq\mu_2\geq\ldots \geq\mu_n=0$, we may write $\lambda_i=k-\mu_i\geq k-\mu_{i+1}=\lambda_{i+1}\leq \lambda_{i}$, which forces $\lambda_1=\lambda_2=\ldots=\lambda_n=0$ and also $\mu_1=\mu_2=\ldots=\mu_n=0$. We deduce $\alpha=\beta$, then $\leq$ is antisymmetric.
\end{proof}
In view of the proposition above, any set $\mathcal{H}_{\alpha}$ can be described as the principal order ideal generated by $\alpha$.

Set $\Z[[G]]=\Z[[T_1,T_2,\ldots,T_n]]$ and consider the following action of $\Z[[G]]$ on the ring $\Z[[x_1^{\pm 1},x_2^{\pm 1},\ldots,x_2^{\pm 1}]]$ of all formal series in $x_1^{\pm 1},x_2^{\pm 1},\ldots,x_2^{\pm 1}$. For all $a\in I_n$ and for all $\alpha\in\Z^n$ set $x^\alpha=x_1^{\alpha_1}x_2^{\alpha_2}\cdots x_n^{\alpha_n}$ and let
\[T^a\cdot x^\alpha=x^{T^a(\alpha)}.\]
By linear extension we obtain
\[\left(\sum_{a\in I_n}c_a T^a\right)\cdot\left(\sum_{\alpha\in\Z^n}d_\alpha x^\alpha\right)=\sum_{a\in\N^n}\sum_{\alpha\in\Z^n}c_a d_\alpha x^{T^a(\alpha)}.\]
Hence, any ideal $\mathcal{H}_\alpha$ uniquely determines the formal series
\[\mathcal{H}_\alpha(x)=\sum_{\beta\leq \alpha}x^\beta.\]
Consider the following element in $\Z[[G]]$,
\[\tau=\sum_{\lambda\in P_n}T^\lambda.\]
Since for all $\beta\in\mathcal{H}_\alpha$ there exists a unique $\lambda\in P_n$ such that $\beta=T^\lambda(\alpha)$, then we immediately recover
\[\tau\cdot x^\alpha=\sum_{\lambda\in P_n}T^\lambda\cdot x^\alpha=\sum_{\beta\leq\alpha}x^\beta=\mathcal{H}_\alpha(x).\]
Hence $\tau$ generates the whole order ideal $\mathcal{H}_\alpha$ by starting from the configuration $\alpha$. Note that $\tau$ does not depend on $\alpha$. Now, consider the following element in the toppling group,
\[T_{[i]}=T_1T_2\ldots T_i \text{ for all }i=1,2,\ldots,n-1.\]
Note that $T_{[i]}$ may be thought of as a Yamanouchi toppling sequence associated with a $1$-column standard Young tableau.
\begin{thm}\label{Tlambda}
Let $\lambda\in P_n$ and denote by $\lambda'=(\lambda'_1,\lambda'_2,\ldots)$ the conjugate of $\lambda$. Then, $T^\lambda$ has a unique
expression in terms of the elements $T_{[i]}$, more precisely:

\[T^\lambda=T_{[\lambda_1']}T_{[\lambda_2']}\cdots.\]
\end{thm}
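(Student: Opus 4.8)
The plan is to compute directly how $T^\lambda$ is expressed in the generators $T_{[i]}=T_1T_2\cdots T_i$ and to verify that the exponent array one obtains is exactly $\lambda$. The starting observation is that $T_{[i]}=T^{\epsilon_1+\epsilon_2+\cdots+\epsilon_i}$, so writing $e_{[i]}=\epsilon_1+\epsilon_2+\cdots+\epsilon_i$ we want to show that the multiset $\{\lambda'_1,\lambda'_2,\lambda'_3,\dots\}$ of column lengths of $\lambda$ satisfies
\[
\sum_{k\geq 1}e_{[\lambda'_k]}=\lambda
\]
in $\Z^n$ (equivalently, after quotienting by the relation $T_1T_2\cdots T_n=1$, i.e.\ $e_{[n]}\equiv 0$). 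Since $G$ is abelian, $T_{[\lambda'_1]}T_{[\lambda'_2]}\cdots=T^{\sum_k e_{[\lambda'_k]}}$, so the theorem reduces to this single identity of integer vectors together with a uniqueness statement.

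First I would check the vector identity componentwise. The $j$th component of $e_{[i]}$ is $1$ if $j\leq i$ and $0$ otherwise, so the $j$th component of $\sum_{k\geq 1}e_{[\lambda'_k]}$ equals $\#\{k\,:\,\lambda'_k\geq j\}$, the number of columns of $\lambda$ of length at least $j$. By the standard conjugation identity for partitions, $\#\{k\,:\,\lambda'_k\geq j\}=\lambda_j$; this is just the observation that the boxes in row $j$ of the Young diagram of $\lambda$ are exactly those columns whose length is $\geq j$. Hence $\sum_{k\geq 1}e_{[\lambda'_k]}=(\lambda_1,\lambda_2,\ldots,\lambda_n)=\lambda$, and therefore $T^\lambda=T_{[\lambda'_1]}T_{[\lambda'_2]}\cdots$. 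Note that since $\lambda\in P_n$ has at most $n-1$ parts, every column length $\lambda'_k$ lies in $\{1,2,\ldots,n-1\}$, so each factor $T_{[\lambda'_k]}$ is among the legitimate generators $T_{[1]},\ldots,T_{[n-1]}$ and no use of the relation $e_{[n]}\equiv 0$ is actually needed.

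For uniqueness I would argue as follows. A word in the $T_{[i]}$'s, $i\in\{1,\ldots,n-1\}$, is (up to reordering, since $G$ is abelian) determined by the multiset of indices $i$ that appear, say with index $i$ appearing $m_i$ times; such a word represents $T^a$ with $a=\sum_i m_i e_{[i]}$, i.e.\ $a_j=\sum_{i\geq j}m_i$, a \emph{weakly decreasing} sequence of nonnegative integers with $a_n=0$, hence $a\in P_n$. Conversely the $m_i$ are recovered from $a$ by $m_i=a_i-a_{i+1}\geq 0$. Thus the map sending a multiset $\{i^{m_i}\}$ to $a=\sum_i m_i e_{[i]}\in P_n$ is a bijection, with inverse $a\mapsto\{i^{a_i-a_{i+1}}\}$. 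Applying the inverse to $a=\lambda$ gives multiplicity $\lambda_i-\lambda_{i+1}$ for the generator $T_{[i]}$, which is precisely the number of columns of $\lambda$ having length exactly $i$; reading these off in weakly decreasing order of column length reproduces the factorization $T_{[\lambda'_1]}T_{[\lambda'_2]}\cdots$. Since by Theorem~\ref{thm} (applied with $k=0$, as both exponent arrays lie in $I_n$) the element $T^\lambda$ determines $\lambda$ uniquely, and by the displayed bijection the exponent array determines the multiset of factors, the expression of $T^\lambda$ as a product of $T_{[i]}$'s is unique up to the order of the factors.

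The only genuinely delicate point is bookkeeping: making sure the indices stay in the admissible range $\{1,\ldots,n-1\}$ (guaranteed by $\lambda\in P_n$), and being careful about what ``unique'' means — unique as an unordered product of the generators $T_{[1]},\ldots,T_{[n-1]}$, since of course the $T_{[i]}$ commute. No part of the argument is computationally heavy; the heart of it is the single conjugation identity $\#\{k:\lambda'_k\geq j\}=\lambda_j$ packaged as a vector equation.
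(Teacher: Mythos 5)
Your proposal is correct, and the existence half is essentially the paper's own argument: the paper simply regroups $\prod_i T_i^{\lambda_i}$ column-by-column into $\prod_i T_{[\lambda'_i]}$, which is exactly your componentwise identity $\sum_k e_{[\lambda'_k]}=\lambda$ via $\#\{k:\lambda'_k\geq j\}=\lambda_j$; the additive-exponent phrasing and the multiplicative regrouping are the same computation. Where you go beyond the paper is the uniqueness claim: the paper's proof stops at existence and never justifies that the expression is unique, whereas you supply the missing argument by exhibiting the bijection between multisets $\{i^{m_i}\}$ of generator indices and exponent arrays $a=\sum_i m_i e_{[i]}\in P_n$ (with inverse $m_i=a_i-a_{i+1}$) and then invoking Theorem~\ref{thm} to see that $T^a$ determines $a$ within $I_n$. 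That addition is genuinely needed for the statement as written, and your observation that $\lambda\in P_n$ keeps all column lengths in $\{1,\dots,n-1\}$, so the relation $T_{[n]}=1$ never intervenes, is exactly the right bookkeeping point.
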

\begin{proof}
Let $\ell(\lambda)$ and $\ell(\lambda')$ denote the lengths of $\lambda$ and $\lambda'$, respectively. Then we have

\[T^\lambda=\prod_{1\leq i\leq \ell(\lambda)}T_i^{\lambda_i}=\prod_{1\leq i\leq \ell(\lambda')}\left(\prod_{1\leq j\leq \lambda'_i}T_j\right)=\prod_{1\leq i\leq \ell(\lambda')}T_{[\lambda'_j]}.\]
\end{proof}
\begin{cor}
We have
\[\tau=\prod_{i=1}^{n-1}\frac{1}{1-T_{[i]}}.\]
\end{cor}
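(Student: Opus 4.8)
The plan is to identify the series $\tau=\sum_{\lambda\in P_n}T^\lambda$ by reorganizing the sum according to the conjugate-partition decomposition supplied by Theorem~\ref{Tlambda}. Concretely, recall that $P_n$ can be identified with the set of integer partitions having at most $n-1$ parts. Given such a partition $\lambda$, its conjugate $\lambda'$ is a partition whose parts $\lambda'_1\geq\lambda'_2\geq\cdots$ all lie in $\{1,2,\ldots,n-1\}$, and conjugation is a bijection between partitions with at most $n-1$ parts and partitions with all parts at most $n-1$. By Theorem~\ref{Tlambda}, $T^\lambda=\prod_j T_{[\lambda'_j]}$, so summing over all $\lambda\in P_n$ is the same as summing over all multisets of elements drawn from $\{T_{[1]},T_{[2]},\ldots,T_{[n-1]}\}$.

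The key step is then the formal identity
\[
\tau=\sum_{\lambda\in P_n}T^\lambda=\sum_{\substack{m_1,m_2,\ldots,m_{n-1}\geq 0}}T_{[1]}^{m_1}T_{[2]}^{m_2}\cdots T_{[n-1]}^{m_{n-1}},
\]
where $m_i$ records the multiplicity of the part $i$ in $\lambda'$ (equivalently, $m_i=\lambda_i-\lambda_{i+1}$). Because the toppling group is commutative, this last multiple sum factors as a product of geometric series:
\[
\sum_{m_1,\ldots,m_{n-1}\geq 0}\prod_{i=1}^{n-1}T_{[i]}^{m_i}=\prod_{i=1}^{n-1}\left(\sum_{m_i\geq 0}T_{[i]}^{m_i}\right)=\prod_{i=1}^{n-1}\frac{1}{1-T_{[i]}},
\]
which is the claimed formula. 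The passage from $\sum_\lambda$ to the indexed sum over $(m_1,\ldots,m_{n-1})$ uses exactly the bijection $\lambda\leftrightarrow\lambda'\leftrightarrow(m_1,\ldots,m_{n-1})$ together with the uniqueness of the $T_{[i]}$-expression asserted in Theorem~\ref{Tlambda}, so that distinct $\lambda$ give distinct monomials in the $T_{[i]}$'s and no collapsing occurs.

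I expect the only delicate point to be the bookkeeping that makes the rearrangement legitimate: one must check that the expansion of $\prod_{i=1}^{n-1}(1-T_{[i]})^{-1}$ in $\Z[[G]]=\Z[[T_1,\ldots,T_n]]$ is well defined, i.e. that each coefficient (as a series in $T_1,\ldots,T_n$) is a finite sum, and that regrouping the terms of $\tau$ does not run into convergence issues. This is harmless because $T_{[i]}=T_1\cdots T_i$ has positive total degree in the $T_j$'s, so for any fixed monomial $T^a$ only finitely many tuples $(m_1,\ldots,m_{n-1})$ contribute; hence both sides are honest elements of $\Z[[G]]$ and agree coefficientwise. Once this is noted, the proof is just the chain of equalities above, and it can be written in a few lines.
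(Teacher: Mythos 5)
Your proof is correct and follows essentially the same route as the paper: both expand $\prod_{i=1}^{n-1}(1-T_{[i]})^{-1}$ as a sum over multisets of the $T_{[i]}$'s, identify these with partitions whose parts are at most $n-1$, and transport the sum back to $P_n$ via conjugation and Theorem~\ref{Tlambda}. Your added remarks on the multiplicities $m_i=\lambda_i-\lambda_{i+1}$ and on coefficientwise convergence in $\Z[[G]]$ are sound but only make explicit what the paper leaves implicit.
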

\begin{proof}
Consider the set $P'_n=\{\lambda'\,|\,\lambda\in P_n\}$. Clearly, $P'_n$ is nothing but the set of all integer partitions whose largest part does not exceed $n-1$ and the map $\lambda\in P_n\mapsto\lambda'\in P'_n$ is a bijection. Then, via Theorem \ref{Tlambda} we recover
\[\prod_{i=1}^{n-1}\frac{1}{1-T_{[i]}}=\sum_{\mu\in P'_n}T_{[\mu_1]}T_{[\mu_2]}\cdots =\sum_{\lambda\in P_n}T^\lambda=\tau.\]
\end{proof}
Now, we may write
\[\mathcal{H}_\alpha(x)=\prod_{i=1}^{n-1}\frac{1}{1-T_{[i]}}\cdot x^\alpha.\]
Note that the action of each $T_i$ on any $x^\alpha$ may be realized by suitably multiplying $x^\alpha$ by a monomial in $\Z[[x_1^{\pm 1},x_2^{\pm 1},\ldots,x_n^{\pm 1}]]$. More precisely, we have
\[T_i\cdot x^\alpha=\bigg(x_i^{-d_i}\prod_{\{v_j, v_i\}\in E}x_j\bigg) x^\alpha.\]
This implies
\[T_{[i]}\cdot x^\alpha=\bigg(x_1^{-d_1}x_2^{-d_2}\cdots x_i^{-d_i}\prod_{k=1}^{i}\prod_{\{v_j,v_k\}\in E}x_j\bigg)x^\alpha,\]
so that we obtain
\begin{equation}\label{T_{[i]}x}
\frac{1}{1-T_{[i]}}\cdot x^\alpha=\bigg(\frac{x_1^{d_1}x_2^{d_2}\cdots x_i^{d_i}}{x_1^{d_1}x_2^{d_2}\cdots x_i^{d_i}-\prod_{k=1}^i\prod_{\{v_j,v_k\}\in E}x_j}\bigg)x^\alpha,
\end{equation}
and finally
\begin{equation}\label{taux}\tau\cdot x^\alpha=\bigg(\prod_{i=1}^{n-1}\frac{x_1^{d_1}x_2^{d_2}\cdots x_i^{d_i}}{x_1^{d_1}x_2^{d_2}\cdots x_i^{d_i}-\prod_{k=1}^i\prod_{\{v_j,v_k\}\in E}x_j}\bigg)x^\alpha.
\end{equation}
Identities \eqref{T_{[i]}x} and \eqref{taux} have to be intended in the following way: set $X_i=x_1^{d_1}x_2^{d_2}\cdots x_i^{d_i}$ and $Y_i=\prod_{k=1}^i\prod_{\{v_j,v_k\}\in E}x_j$, then expand the right-hand side in \eqref{taux} as a power series in $Y_i/X_i$, so that
\[T_{[i]}\cdot x^\alpha = \frac{X_i}{X_i-Y_i}x^\alpha=\sum_{n\geq 0}\bigg(\frac{Y_i}{X_i}\bigg)^nx^\alpha.\]
\begin{ex}[The complete graph $G=K_n$]
We have
\[T_i\cdot x^\alpha=x^\alpha\,\frac{x_1x_2\cdots x_n}{x_i^n},\]
so that
\[T_{[i]}\cdot x^\alpha=x^\alpha\,\frac{(x_1x_2\cdots x_n)^i}{(x_1x_2\cdots x_i)^n},\]
and finally
\[\tau(x)=\prod_{i=1}^{n-1}\frac{(x_1x_2\cdots x_i)^n}{(x_1x_2\cdots x_i)^n-(x_1x_2\cdots x_n)^i}.\]
By expanding as a power series in $(x_1x_2\cdots x_n)^i/(x_1x_2\cdots x_i)^n$ we recover
\[\mathcal{H}_\alpha(x)=x^\alpha\,\prod_{i=1}^{n-1}\sum_{k\geq 0}\frac{(x_1x_2\cdots x_n)^{ki}}{(x_1x_2\cdots x_i)^{kn}}.\]
So we have the following theorem.
\begin{thm}
For each graph $\mathcal{G}=(V,E)$ there exists a formal power series $\tau(x)\in\Z[[x_1^{\pm 1},x_2^{\pm 1},\ldots,x_n^{\pm 1}]]$ such that
\[\mathcal{H}_\alpha(x)=x^\alpha\tau(x),\]
for all $\alpha\in\Z^n$.
\end{thm}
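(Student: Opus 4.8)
The plan is to read $\tau(x)$ directly off the explicit computation already carried out in \eqref{taux}. Keeping the abbreviations $X_i=x_1^{d_1}x_2^{d_2}\cdots x_i^{d_i}$ and $Y_i=\prod_{k=1}^i\prod_{\{v_j,v_k\}\in E}x_j$ used there, I would define
\[
\tau(x)=\prod_{i=1}^{n-1}\frac{X_i}{X_i-Y_i},
\]
understood — exactly as in the reading convention stated right after \eqref{taux} — as the formal expansion of each factor as a geometric series in $Y_i/X_i$ followed by the product of these $n-1$ series. By construction this expression does not involve $\alpha$, so the theorem will follow once I verify two things: that $\tau(x)$ is a bona fide element of $\Z[[x_1^{\pm1},x_2^{\pm1},\ldots,x_n^{\pm1}]]$, and that multiplying it by $x^\alpha$ reproduces $\mathcal{H}_\alpha(x)$.

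For well-definedness, a generic term of the expanded product is a Laurent monomial $\prod_{i=1}^{n-1}(Y_i/X_i)^{k_i}$ indexed by $(k_1,\dots,k_{n-1})\in\N^{n-1}$, and I must show that each Laurent monomial of $\tau(x)$ is produced by only finitely many such tuples (in fact exactly one), so that its coefficient is a well-defined integer. Since $T_{[i]}\cdot x^\alpha=(Y_i/X_i)\,x^\alpha$, the tuple $(k_1,\dots,k_{n-1})$ corresponds to the group element $\prod_{i}T_{[i]}^{k_i}$, which by Theorem~\ref{Tlambda} equals $T^\lambda$ for the unique $\lambda\in P_n$ whose conjugate $\lambda'$ has exactly $k_i$ parts equal to $i$. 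Distinct tuples thus give distinct $\lambda$, hence distinct $T^\lambda\in G$ (the map $a\mapsto T^a$ is a bijection on $I_n$, equivalently $\le$ is a partial order by Proposition~\ref{P:order}), hence distinct configurations $T^\lambda(\mathbf{0})$ and distinct monomials $x^{T^\lambda(\mathbf{0})}$. So no two tuples collide, and $\tau(x)$ has well-defined coefficients, each equal to $0$ or $1$.

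For the identity $\mathcal{H}_\alpha(x)=x^\alpha\tau(x)$, I would exploit that each $T_i$ acts on $\Z^n$ as translation by the fixed vector $\Delta_i$, whence $T^\lambda(\alpha)=\alpha+T^\lambda(\mathbf{0})$ for every $\alpha\in\Z^n$ and every $\lambda\in P_n$. Consequently $\beta\le\alpha$ if and only if $\beta-\alpha\le\mathbf{0}$, and therefore
\[
\mathcal{H}_\alpha(x)=\sum_{\beta\le\alpha}x^\beta=\sum_{\gamma\le\mathbf{0}}x^{\alpha+\gamma}=x^\alpha\sum_{\gamma\le\mathbf{0}}x^\gamma=x^\alpha\,\mathcal{H}_{\mathbf{0}}(x).
\]
Specializing \eqref{taux} to $\alpha=\mathbf{0}$ identifies $\mathcal{H}_{\mathbf{0}}(x)$ with the product displayed above, so $\mathcal{H}_\alpha(x)=x^\alpha\tau(x)$, as claimed.

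I do not expect a serious obstacle here: the statement is essentially a repackaging of \eqref{taux}, with the $\alpha$-dependence isolated into the prefactor $x^\alpha$ via the translation-invariance of the topplings. The only step needing a little care is the legitimacy of the infinite product expansion — the "no collisions" claim — and that is exactly where the partial-order property (Proposition~\ref{P:order}) and the bijectivity of $a\mapsto T^a$ on $I_n$ enter, guaranteeing that $\tau(x)$ is a genuine formal Laurent series independent of $\alpha$.
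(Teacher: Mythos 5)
Your proof is correct and follows essentially the same route the paper intends: the theorem is read off directly from \eqref{taux}, with $\tau(x)=\prod_{i=1}^{n-1}X_i/(X_i-Y_i)$ independent of $\alpha$. Your added justifications --- that distinct tuples $(k_1,\dots,k_{n-1})$ yield distinct monomials (so the formal product is well defined) and that translation-invariance of the topplings gives $\mathcal{H}_\alpha(x)=x^\alpha\mathcal{H}_{\mathbf 0}(x)$ --- are exactly the details the paper leaves implicit, and both are sound.
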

\end{ex}
Elements $T_{[i]}$'s not only concur in giving an explicit expression for the operator $\tau$, they also are algebrically independent and generate a subalgebra $\C[G]_\geq$ of $\C[G]$. More precisely, let $\C[G]_\geq =\C[T_{[1]},T_{[2]},\ldots,T_{[n-1]}]$ and observe that, via Theorem \ref{Tlambda}, $\C[G]_\geq$ exactly is the subalgebra generated by all $T^\lambda$'s with $\lambda\in P_n$. A wider set of generators of $\C[G]_{\geq}$ is obtained by setting
\[T_{[i,j]}=T_{[i]}T_{[i+1]}\cdots T_{[j-1]} \text{ for all }1\leq i<j\leq n.\]
Note that each $T_{[i,j]}$ is a Yamanouchi toppling sequence associated with a tableaux whose conjugate shape consists of consecutive integers. Obviously $T_{[i]}=T_{[i,i+1]}$ so that the $T_{[i,j]}$'s generate the whole algebra $\C[G]_{\geq}$. On the other hand, the expression of each $T^\lambda$ in terms of such generators is not unique. In order to find a \textit{reduced decomposition} of $T^{\lambda}=T_{[\lambda_1']}T_{[\lambda_2']}\cdots$ in terms of the $T_{[i,j]}$'s we take the following path. Rearrange and associate the $T_{[\lambda_i']}$'s so that
\[T^\lambda=(T_{[i_1]}\cdots T_{[j_1]})(T_{[i_{2}]} \cdots T_{[j_2}])\ldots(T_{[i_{k}]}\cdots T_{[j_k}]),\]
and each of the sequences $(i_h,\ldots, j_h) $'s consists of  increasing consecutive integers. Then the reduced decomposition of $T^\lambda$ is
\[T^\lambda=T_{[i_1,j_1+1]}T_{[i_{2},j_2+1]}\cdots T_{[i_{k},j_k+1]}.\]
Let us explicit the idea by means of a guiding example.
\begin{ex}
If $\lambda=(8,7,4,3,2,2,1)$ then $\lambda'=(7,6,4,3,2,2,2,1)$. Then we recover
\begin{multline*}T^\lambda=T_{[7]}T_{[6]}T_{[4]}T_{[3]}T_{[2]}T_{[2]}T_{[2]}T_{[1]}=(T_{[1]}T_{[2]}T_{[3]}T_{[4]})(T_{[2]})(T_{[2]})(T_{[6]}T_{[7]})\\=T_{[1,5]}T_{[2,3]}^2T_{[6,8]}.\end{multline*}
A reduced decomposition of $T^\lambda$ is then $T_{[1,5]}T_{[2,3]}^2T_{[6,8]}$.
\end{ex}
In general, any $T^\lambda$ admits several reduced decompositions. For instance, both $T_{[1,3]}T_{[2,4]}$ and $T_{[1,4]}T_{[2,3]}$ are reduced decompositions of $T_1^4T_2^3T_3$. Hereafter, the total number of (reduced and non reduced) decompositions of $T^\lambda$ will be denoted by $C(\lambda)$ or by $C_{\alpha,\beta}$ if $\alpha,\beta\in\Z^n$ and $\beta=T^\lambda(\alpha)$. A decomposition $T^\lambda=T_{[i_1,j_1]}T_{[i_2,j_2]}\cdots T_{[i_l,j_l]}$ is said to be \textit{square free} if and only if each generator occurs with multiplicity at most one. For instance, $T_{[1,2]}T_{[1,3]}T_{[2,5]}$ is square free but $T_{[1,2]}^2T_{[2,3]}T_{[2,5]}$ is not. Also, note that $T_{[1,2]}T_{[1,3]}T_{[2,5]}=T_{[1,2]}^2T_{[2,3]}T_{[2,5]}$ so that any $T^\lambda$ may have both square free and non square free decompositions. Now, consider the operator $\hat{\tau}$ defined by
\begin{equation}
\label{hattau}\hat{\tau}=\prod_{1\leq i<j\leq n}\frac{1}{1-T_{[i,j]}}.
\end{equation}
We recover
\[\hat{\tau}=\sum_{\lambda\in P_n}C(\lambda)T^\lambda,\]
so that we may write
\[\hat{\mathcal{H}}_\alpha(x)=\hat{\tau}\cdot x^\alpha=\sum_{\lambda\in P_n}C(\lambda)\,x^{T^\lambda(\alpha)}=\sum_{\beta\leq\alpha}C_{\alpha,\beta}\,x^\beta.\]
Roughly speaking the series $\hat{\mathcal{H}}_\alpha(x)$, as well as $\mathcal{H}_\alpha(x)$, is a generating series for the set $\mathcal{H}_\alpha$. However, when $\hat{\tau}$ acts on the monomial $x^\alpha$, each $\beta\in\mathcal{H}_\alpha$ is obtained $C_{\alpha,\beta}$ times.

At this point, any element in $G$, and in particular any $T^\lambda$, can be expressed in terms of three families of generators of the algebra $\C[G]_\geq$. Namely, we have the sets $\{T_i\,|\,i=1,2,\ldots,n\}$, $\{T_{[i]}\,|\,i=1,2,\ldots,n\}$ and $\{T_{[i,j]}\,|\,1\leq i<j\leq n\}$. Each family of generators gives rise to a notion of length of the  decomposition, that is the total number of generators involved. More precisely, each $T^\lambda$ admits a unique reduced decomposition in terms of the $T_i$'s, whose length $\ell_1$ equals the size of the partition $\lambda\in P_n$. Analogously, such a $T^\lambda$ can be written in a unique way in terms of the $T_{[i]}$'s. In particular, we have $T^\lambda=T_{[\lambda_1']}T_{[\lambda_2']}\cdots$ and $\ell_2=\lambda_1$ generators are involved. Moreover, each of the $C(\lambda)$ pairwise different decompositions of $T^\lambda$ in terms of the $T_{[i,j]}$'s involves a certain number, say $\ell_3$, of generators. Finally, each reduced decomposition of $T^\lambda$ in terms of the $T_{[i,j]}$'s involves a cerrtain number, say $d$, of pairwise distinct generators. As a matter of fact, both $\ell_1$ and $\ell_2$ can be easily recovered once that a decomposition of $T^\lambda$ in terms of the $T_{[i,j]}$'s is known. Indeed, assume we have
\[T^\lambda=T_{[i_1,j_1]}^{a_1}T_{[i_2,j_2]}^{a_2}\cdots T_{[i_d,j_d]}^{a_d},\]
with the $[i_k,j_k]$'s all distinct. Then, it is not difficult to see that\footnote{where we assume $\binom{1}{2}=0$}
\[\ell_1=\sum_{h=1}^da_h\left(\binom{j_h}{2}-\binom{i_h}{2}\right),\]
and
\[\ell_2=\sum_{h=1}^da_h(j_h-i_h).\]
\begin{ex}
Consider again $\lambda=(8,7,4,3,2,2,1)$ so that
\[T^\lambda=T_1^8T_2^7T_3^4T_4^3T_5^2T_6^2T_7.\]
We have $\ell_1=|\lambda|=8+7+4+3+2+2+1=27$. Moreover, being $\lambda'=(7,6,4,3,2,2,2,1)$ then $T^\lambda=T_{[7]}T_{[6]}T_{[4]}T_{[3]}T_{[2]}T_{[2]}T_{[2]}T_{[1]}$ and in fact $\ell_2=\lambda_1=8$. Now, consider the following decomposition of $T^\lambda$ in terms of the $T_{[i,j]}$'s,
\[T^\lambda=T_{[1,5]}T_{[2,3]}^2T_{[6,8]}.\]
It involves $\ell_3=4$ generators, and $d=3$ among them are distinct. Finally, note that from $T^\lambda=T_{[1,5]}T_{[2,3]}^2T_{[6,8]}$ we recover
\[\ell_1=\binom{5}{2}-\binom{1}{2}+2\left(\binom{3}{2}-\binom{2}{2}\right)+\binom{8}{2}-\binom{6}{2}=27,\]
and also
\[\ell_2=(5-1)+2(3-2)+(8-6)=8.\]
\end{ex}
\begin{thm}
Set
\[\hat{\tau}(z_1,z_2,z_3,q)=\sum_{\lambda\in P_n}\bigg(\sum_{\ell_1,\ell_2,\ell_3,d}z_1^{\ell_1}z_2^{\ell_2} z_3^{\ell_3}q^{d}\bigg)\,T^\lambda,\]
where the values of $\ell_1,\ell_2,\ell_3,d$ range over all pairwise distinct decompositions of $T^\lambda$ in terms of the generators $T_{[i,j]}$'s.
Then we have
\begin{equation}
\label{tqz2}\hat{\tau}(z_1,z_2,z_3,q)=\prod_{1\leq i<j\leq n}\frac{1-(1-q)T_{[i,j]}z_3z_2^{j-i}z_1^{\binom{j}{2}-\binom{i}{2}}}{1-T_{[i,j]}z_3z_2^{j-i}z_1^{\binom{j}{2}-\binom{i}{2}}}.
\end{equation}
\end{thm}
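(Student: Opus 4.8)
The plan is to view both sides as generating series over the commutative monoid of finite multisets of the generators $T_{[i,j]}$. Attach to $T_{[i,j]}$ the weight $w_{[i,j]}=z_3\,z_2^{\,j-i}\,z_1^{\binom{j}{2}-\binom{i}{2}}$, and recall that, since $G$ is commutative, a decomposition of $T^\lambda$ in terms of the $T_{[i,j]}$'s is exactly a tuple $a=(a_{[i,j]})_{1\le i<j\le n}\in\N^{\binom n2}$ with $\prod_{i<j}T_{[i,j]}^{a_{[i,j]}}=T^\lambda$. For such a tuple, the identities for $\ell_1,\ell_2$ established above, together with $\ell_3=\sum_{i<j}a_{[i,j]}$ and $d=\#\{[i,j]:a_{[i,j]}\ge1\}$, give
\[
z_1^{\ell_1}z_2^{\ell_2}z_3^{\ell_3}q^{d}=\prod_{1\le i<j\le n}q^{\min(a_{[i,j]},1)}\,w_{[i,j]}^{a_{[i,j]}},
\]
so the term this decomposition contributes to $\hat\tau(z_1,z_2,z_3,q)$ is $\prod_{i<j}\big(q^{\min(a_{[i,j]},1)}\,w_{[i,j]}^{a_{[i,j]}}\,T_{[i,j]}^{a_{[i,j]}}\big)$.

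Next I would note that the map $a\mapsto\prod_{i<j}T_{[i,j]}^{a_{[i,j]}}$ always lands in $\{T^\lambda:\lambda\in P_n\}$: expanding each $T_{[i,j]}=T_{[i]}\cdots T_{[j-1]}$ as in the proof of Theorem~\ref{Tlambda}, the product equals $\prod_{k=1}^{n-1}T_{[k]}^{c_k}$ with $c_k\ge0$, hence $T^\lambda$ with $\lambda_m=\sum_{k\ge m}c_k$ weakly decreasing and $\lambda_n=0$; moreover $\lambda$ is uniquely determined by $T^\lambda$ in view of Theorem~\ref{thm}. The fibre of this map over $T^\lambda$ is precisely the set of decompositions of $T^\lambda$, and it is finite because each of them forces $\sum_{i<j}a_{[i,j]}(j-i)=\lambda_1$. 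Grouping the terms of $\N^{\binom n2}$ according to the value of $\prod_{i<j}T_{[i,j]}^{a_{[i,j]}}$ therefore gives, as an identity of formal sums $\sum_{\lambda\in P_n}c_\lambda T^\lambda$ with $c_\lambda\in\Z[z_1,z_2,z_3,q]$,
\[
\hat\tau(z_1,z_2,z_3,q)=\sum_{a\in\N^{\binom n2}}\ \prod_{1\le i<j\le n}\big(q^{\min(a_{[i,j]},1)}\,w_{[i,j]}^{a_{[i,j]}}\,T_{[i,j]}^{a_{[i,j]}}\big).
\]

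It then remains to factor the right-hand side over the $\binom n2$ pairs and evaluate each factor as a geometric series: for a fixed pair the term $a_{[i,j]}=0$ contributes $1$ while the terms $a_{[i,j]}\ge1$ contribute $q\,(w_{[i,j]}T_{[i,j]})^{a_{[i,j]}}$, so the factor equals
\[
1+q\,\frac{w_{[i,j]}T_{[i,j]}}{1-w_{[i,j]}T_{[i,j]}}=\frac{1-(1-q)\,w_{[i,j]}T_{[i,j]}}{1-w_{[i,j]}T_{[i,j]}}.
\]
Substituting $w_{[i,j]}=z_3z_2^{\,j-i}z_1^{\binom j2-\binom i2}$ produces exactly \eqref{tqz2}. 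The only delicate point is the legitimacy of this rearrangement: one must check that the product over the $\binom n2$ pairs makes sense as a formal sum $\sum_{\lambda\in P_n}c_\lambda T^\lambda$, i.e. that after expansion the coefficient of each fixed $T^\lambda$ is a finite sum. This is guaranteed by the bound $\sum_{i<j}a_{[i,j]}(j-i)=\lambda_1$ noted above, which makes every fibre finite and both sides well-defined; granting this, the argument is pure weight bookkeeping together with the elementary identity $1+q\frac{x}{1-x}=\frac{1-(1-q)x}{1-x}$.
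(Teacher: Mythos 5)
Your proposal is correct and is essentially the paper's own argument run in the opposite direction: the paper starts from the product, rewrites each factor as $1+\frac{q\,w_{[i,j]}T_{[i,j]}}{1-w_{[i,j]}T_{[i,j]}}=\sum_{k\ge 0}q^{\min(k,1)}\bigl(w_{[i,j]}T_{[i,j]}\bigr)^k$ and leaves the expansion over tuples $a\in\N^{\binom n2}$ as ``straightforward computations,'' which is exactly the weight bookkeeping you carry out explicitly. Your version is somewhat more careful (you justify finiteness of the fibres and the identification of decompositions with exponent tuples), but the key identity and the structure of the argument are the same.
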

\begin{proof}
We have
\begin{multline*}\hat{\tau}(z_1,z_2,z_3,q)=\prod_{1\leq i<j\leq n}\left(1+\frac{qT_{[i,j]}z_3z_2^{j-i}z_1^{\binom{j}{2}-\binom{i}{2}}}{1-T_{[i,j]}z_3z_2^{j-i}z_1^{\binom{j}{2}-\binom{i}{2}}}\right)\\
=\prod_{1\leq i<j\leq n}\sum_{k\geq 0}T_{[i,j]}^kqz_1^{k\left(\binom{j}{2}-\binom{i}{2}\right)}z_2^{k(j-i)}z_3^k.\end{multline*}
Then, straightforward computations will give \eqref{tqz2}.
\end{proof}
We may define a parametrized version of the series $\hat{\mathcal{H}}_\alpha(x)$ by setting
\begin{equation}\label{hatH}
\hat{\mathcal{H}}_\alpha(z_1,z_2,z_3,q;x)=\hat{\tau}(z_1,z_2,z_3,q)\cdot x^\alpha.
\end{equation}
We recover

\[\hat{\mathcal{H}}_\alpha(z_1,z_2,z_3,q;x)=\sum_{\beta\leq\alpha}C_{\alpha,\beta}(z_1,z_2,z_3,q) x^\beta,\]
where the polynomial $C_{\alpha,\beta}(z_1,z_2,z_3,t)$ stores the values of $\ell_1,\ell_2,\ell_3,d$ relative to all pairwise distinct decompositions of the unique $T^\lambda$ such that $\beta=T^\lambda(\alpha)$,
\[C_{\alpha,\beta}(z_1,z_2,z_3,q)=\sum_{\ell_1,\ell_2,\ell_3,d} z_1^{\ell_1}z_2^{\ell_2}z_3^{\ell_3}q^{d}.\]
In particular,  we remark that for all $\beta\in\mathcal{H}_\alpha$ we have
\[C_{\alpha,\beta}(1,1,1,1)=C_{\alpha,\beta}.\]
Moreover, we note that from \eqref{hatH} we gain a combinatorial description of the series $\hat{\mathcal{K}}_\alpha(z_1,z_2,z_3,q;x)$ defined by
\[\hat{\mathcal{K}}_\alpha(z_1,z_2,z_3,q;x)=\hat{\tau}(z_1,z_2,z_3,q)^{-1}\cdot x^{\alpha}.\]
In fact, being $\hat{\tau}(z_1,z_2,z_3,q)^{-1}=\hat{\tau}\left(z_1,z_2,(1-q)z_3,\frac{q}{q-1}\right)$, we gain
\[\hat{\mathcal{K}}_\alpha(z_1,z_2,z_3,q;x)=\hat{\mathcal{H}}_\alpha\left(z_1,z_2,(1-q)z_3,\frac{q}{q-1};x\right)\]
so that the following combinatorial description is obtained,
\[\hat{\mathcal{K}}_\alpha(z_1,z_2,z_3,q;x)=\sum_{\beta\leq \alpha}
\left(\sum_{\ell_1,\ell_2,\ell_3,d} z_1^{\ell_1}z_2^{\ell_2}z_3^{\ell_3}(1-q)^{\ell_3-d}(-q)^d\right) x^\beta.\]
Since $\ell_1\geq \ell_2\geq\ell_3\geq d\geq 0$ then also the coefficient $C'_{\alpha,\beta}(z_1,z_2,z_3,q)$ of $x^\beta$ in $\hat{\mathcal{K}}_\alpha(z_1,z_2,z_3,q;x)$ is a polynomial with integer coefficients. It is related to $C_{\alpha,\beta}(z_1,z_2,z_3,q)$ by means of
\[C'_{\alpha,\beta}(z_1,z_2,z_3,q)=C_{\alpha,\beta}\left(z_1,z_2,(1-q)z_3,\frac{q}{q-1}\right).\]
By setting $z_1=z_2=z_3=q=1$ in $\hat{\mathcal{K}}_\alpha(z_1,z_2,z_3,q;x)$ the only decompositions giving a nonzero contribution are those for which $\ell_3-d=0$, that is exactly square free decompositions.
\section{Hall-Littlewood symmetric functions and Yamanouchi toppling}
Let $x=\{x_1,x_2,\ldots,x_n\}$ and denote by $\Lambda(x)$ the ring of symmetric polynomials in $x$ with integer coefficients. For each positive integer $i$, let $h_i(x)$ denote the $i$th complete homogeneous symmetric polynomial, so that we have $\Lambda(x)=\mathbb{Z}[h_1,h_2,\ldots]$ and then any $f(x)\in\Lambda(x)$ may be written in a unique way as a polynomial in the $h_i(x)$'s with integer coefficients. In particular, for all $\alpha\in\N^n$ we define
\[s_\alpha(x)=\det(h_{\alpha_i+j-i}(x))_{1\leq i,j\leq n}.\]
The Jacobi-Trudi formula assures us that $s_\alpha(x)$ is a Schur polynomial if $\alpha_1\geq \alpha_2\geq\ldots\geq\alpha_n\geq 0$, that is if $\alpha$ is an integer partition. Moreover, by swapping $i$th row and $(i+1)$th row in the determinant above we have
\begin{equation}\label{s_inv}
s_{\beta}(x)=-s_{\alpha}(x) \text{ with }\beta=(\alpha_1,\ldots,\alpha_{i+1}-1,\alpha_i+1,\ldots,\alpha_n).
\end{equation}
This is to say that any $s_\alpha(x)$ is zero or there is a partition $\lambda$ such that $s_\alpha(x)=\pm s_\lambda(x)$. Now, consider the linear functional
\[E\colon x^\alpha\mapsto \begin{cases}s_\alpha(x),&\text{ if }\alpha\in\N^n;\\0,&\text{ if }\alpha\notin\N^n.\end{cases}\]
Hence, one may define symmetric polynomials by means of
\[E\,\hat{\mathcal{H}}_\alpha(z_1,z_2,z_3,q;x)\text{ and }E\,\hat{\mathcal{K}}_\alpha(z_1,z_2,z_3,q;x).\]
Next theorem states a first obvious but important fact.
\begin{thm}
For any graph $\mathcal{G}$, both
\[\{E\,\hat{\mathcal{H}}_\alpha(z_1,z_2,z_3,q;x)\}_{\alpha_1\geq \alpha_2\geq \ldots\geq \alpha_n\geq 0} \text{ and }\{E\,\hat{\mathcal{K}}_\alpha(z_1,z_2,z_3,q;x)\}_{\alpha_1\geq \alpha_2\geq \ldots\geq \alpha_n\geq 0},\]
are bases of the ring $\Lambda(z_1,z_2,z_3,q;x)$ of symmetric polynomials in $x$ with coefficients in $\mathbb{Z}[q,z_1,z_2,z_3]$.
\end{thm}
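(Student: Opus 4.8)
The plan is to exhibit a unitriangular change of basis between $\{E\,\hat{\mathcal{H}}_\alpha\}$ (resp. $\{E\,\hat{\mathcal{K}}_\alpha\}$) and a known basis of $\Lambda(z_1,z_2,z_3,q;x)$, namely the Schur basis $\{s_\lambda(x)\}_{\lambda_1\geq\cdots\geq\lambda_n\geq 0}$. Recall that $\Lambda(x)$, and hence $\Lambda(z_1,z_2,z_3,q;x)=\mathbb{Z}[q,z_1,z_2,z_3]\otimes\Lambda(x)$, has $\{s_\lambda(x)\}$ as a free basis over the coefficient ring, so it suffices to show that the transition matrix from $\{s_\lambda\}$ to $\{E\,\hat{\mathcal{H}}_\alpha\}$ is invertible over $\mathbb{Z}[q,z_1,z_2,z_3]$.

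First I would use the expansion already established in the excerpt,
\[
\hat{\mathcal{H}}_\alpha(z_1,z_2,z_3,q;x)=\sum_{\beta\leq\alpha}C_{\alpha,\beta}(z_1,z_2,z_3,q)\,x^\beta,
\]
and apply $E$ termwise. By definition of $E$ and by \eqref{s_inv}, each $E\,x^\beta=s_\beta(x)$ is either zero or $\pm s_{\lambda(\beta)}$ for a genuine partition $\lambda(\beta)$ obtained by sorting $\beta$. The key point is to isolate the \emph{leading term}: among all $\beta\leq\alpha$, the configuration $\beta=\alpha$ itself occurs, with coefficient $C_{\alpha,\alpha}=C((0,\dots,0))=1$ (the empty decomposition), contributing exactly $s_\alpha(x)$. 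I would then argue that every other $\beta<\alpha$ contributes a multiple of $s_{\lambda(\beta)}$ with $\lambda(\beta)$ strictly smaller than $\alpha$ in dominance order: since $\beta=T^\lambda(\alpha)$ for some nonzero $\lambda\in P_n$ and toppling preserves size while the partition $T^\lambda$ redistributes weight "downhill", one checks $\lambda(\beta)$ is dominated by $\alpha$ and $\lambda(\beta)\neq\alpha$. Hence, ordering partitions by dominance (refined to a total order),
\[
E\,\hat{\mathcal{H}}_\alpha(z_1,z_2,z_3,q;x)=s_\alpha(x)+\sum_{\mu<\alpha}c_{\alpha\mu}(z_1,z_2,z_3,q)\,s_\mu(x),
\]
which is unitriangular, hence the family is a basis. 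The same argument applies verbatim to $\hat{\mathcal{K}}_\alpha$, since its $x^\alpha$-coefficient is also $C'_{\alpha,\alpha}=1$ and all other terms again involve configurations $\beta<\alpha$.

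The main obstacle is the dominance claim: that if $\beta=T^\lambda(\alpha)$ with $\lambda\in P_n\setminus\{0\}$ and $\alpha$ a partition, then the sorted partition $\lambda(\beta)$ satisfies $\lambda(\beta)\trianglelefteq\alpha$ with $\lambda(\beta)\neq\alpha$ (so that $s_\beta$ is supported on strictly smaller Schur functions, or vanishes). For a general graph this needs care — topplings do move weight "down" in the sense of the partial order $\leq$, but one must verify this descent is compatible with the dominance order on the \emph{sorted} vectors, not just on $\mathbb{Z}^n$. I would handle this by noting that each generator $T_{[i]}$ acts on $x^\alpha$ by multiplying by a Laurent monomial whose exponent vector has nonpositive partial sums when read appropriately (this is essentially \eqref{T_{[i]}x}), so $T^\lambda$ shifts $\alpha$ by a nonnegative integer combination of such vectors; combined with the fact that each such shift either kills $s_\beta$ or lowers the dominance class, the triangularity follows. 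Alternatively, and more cheaply, one can observe the theorem is essentially formal: the matrix $(C_{\alpha,\beta})$ is itself unitriangular with respect to $\leq$ on configurations with $1$'s on the diagonal, and $E$ sends the $\{x^\beta\}$ supported on $\mathbb{N}^n$ (sorted to partitions) to the Schur basis in a manner that respects a compatible ordering — so the composite is unitriangular and the statement is immediate. I expect this second route to be the one intended by the authors' phrase "a first obvious but important fact."
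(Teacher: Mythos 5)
There is a genuine gap, and it sits exactly where you flagged it: the passage from triangularity of $(C_{\alpha,\beta})$ with respect to toppling dominance on \emph{configurations} to triangularity with respect to dominance on \emph{straightened partitions}. Your first route correctly shows that each $T_{[i]}$ shifts $\alpha$ by a vector with nonpositive partial sums, so every $\beta\leq\alpha$ satisfies $\beta_1+\cdots+\beta_k\leq\alpha_1+\cdots+\alpha_k$ for all $k$. But the straightening $s_\beta=\pm s_{\lambda(\beta)}$ (which sorts $\beta+\delta$, not $\beta$) does not respect this: take $n=2$, $\mathcal{G}$ the single edge, $\alpha=(3,2)$, $\beta=T_{[1]}^3(\alpha)=(0,5)$. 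Then $\beta\leq\alpha$ and $\beta$ is dominated by $\alpha$ as a composition, yet $s_{(0,5)}=-s_{(4,1)}$ and $(4,1)$ is strictly \emph{above} $(3,2)$ in dominance order. Worse, the cancellations destroy the diagonal entry entirely: here $\hat{\mathcal{H}}_{(3,2)}(1,1,1,1;x)=x^{(3,2)}+x^{(2,3)}+x^{(1,4)}+x^{(0,5)}+\cdots$, and applying $E$ gives $s_{(3,2)}+0-s_{(3,2)}-s_{(4,1)}=-s_{(4,1)}$, whose coefficient on $s_{(3,2)}$ is $0$, not $1$. So the claimed unitriangular expansion $E\,\hat{\mathcal{H}}_\alpha=s_\alpha+\sum_{\mu<\alpha}c_{\alpha\mu}s_\mu$ is false, and your ``cheaper'' second route fails for the same reason: $E$ does not send the poset of configurations into a compatible order on partitions.

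This is not a repairable detail of your argument but a structural obstruction: in the same example the full degree-$5$ transition matrix from $\{s_{(5)},s_{(4,1)},s_{(3,2)}\}$ to $\{E\,\hat{\mathcal{H}}_\alpha\}$ has determinant $1-qu^2-qu^4+q^2u^4$ (with $u=z_1z_2z_3$), which vanishes at $q=z_i=1$ and in particular is not a unit of $\mathbb{Z}[q,z_1,z_2,z_3]$. So no triangularity argument can prove the statement as literally written; any correct treatment must either work over the fraction field $\mathbb{Q}(q,z_1,z_2,z_3)$ (where generic linear independence can still be argued, e.g.\ by specializing to $z_3=0$ or $q=0$, where $E\,\hat{\mathcal{H}}_\alpha$ and $E\,\hat{\mathcal{K}}_\alpha$ both reduce to $s_\alpha$) or reformulate the coefficient ring. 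For comparison, the paper offers no proof at all --- it presents the theorem as ``a first obvious but important fact'' --- so the burden of locating and resolving this subtlety is real, and your write-up currently asserts the key triangularity rather than proving it.
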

The bases above gain particular interest in view of the special case when Yamanouchi toppling is performed on the simple path $\mathcal{G}=\mathcal{L}$ with edges
\[\{v_1,v_2\},\{v_2,v_3\},\ldots,\{v_{n-1},v_n\}.\]
\begin{lem}
For the graph $\mathcal{G}=\mathcal{L}$ the generator $T_{[i,j]}$ realizes the lowering operator, more precisely for all $1\leq i<j\leq n$ we have
\[T_{[i,j]}(\alpha) =\alpha-\epsilon_i+\epsilon_{j}. \]
\end{lem}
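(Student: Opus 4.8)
The plan is to compute $T_{[i,j]}$ directly from the definitions, using the special structure of the path $\mathcal{L}$. The key point is to identify the topplings $T_k$ on $\mathcal{L}$ explicitly: for $\mathcal{L}$ the vertex $v_1$ has degree $1$ with unique neighbor $v_2$, the vertex $v_n$ has degree $1$ with unique neighbor $v_{n-1}$, and every interior vertex $v_k$ with $2\le k\le n-1$ has degree $2$ with neighbors $v_{k-1}$ and $v_{k+1}$. Hence, by \eqref{T}, we have $T_1(\alpha)=\alpha-\epsilon_1+\epsilon_2$, $T_n(\alpha)=\alpha+\epsilon_{n-1}-\epsilon_n$, and $T_k(\alpha)=\alpha+\epsilon_{k-1}-2\epsilon_k+\epsilon_{k+1}$ for $2\le k\le n-1$. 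Equivalently, writing $\Delta_k$ for the displacement vector, $\Delta_k=\epsilon_{k-1}-2\epsilon_k+\epsilon_{k+1}$ with the convention $\epsilon_0=\epsilon_{n+1}=0$; this is the discrete second-difference pattern characteristic of the path.

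Next I would compute $T_{[i]}=T_1T_2\cdots T_i$ for $1\le i\le n-1$. Since $G$ is commutative and each $T_k$ acts by translation $\alpha\mapsto\alpha+\Delta_k$, we have $T_{[i]}(\alpha)=\alpha+\sum_{k=1}^i\Delta_k$. The sum $\sum_{k=1}^i\Delta_k$ telescopes: each interior $\epsilon_m$ for $2\le m\le i$ receives $+1$ (from $\Delta_{m-1}$ as right-neighbor, i.e. the $\epsilon_{k+1}$ term with $k=m-1$), $+1$ (from $\Delta_{m+1}$... no, wait: within the range $k\le i$ only $\Delta_{m-1}$ and $\Delta_{m+1}$ contribute if both indices are $\le i$), and $-2$ (from $\Delta_m$ itself). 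A cleaner way: $\sum_{k=1}^i\Delta_k = -\epsilon_1+\epsilon_{i+1} + \text{(boundary corrections)}$; carrying out the bookkeeping one gets $\sum_{k=1}^i\Delta_k=\epsilon_{i+1}-\epsilon_1$ exactly, because the only contributions that survive telescoping are the $-\epsilon_1$ from $\Delta_1$ and the $+\epsilon_{i+1}$ from $\Delta_i$ (all interior $\epsilon_m$ with $2\le m\le i$ cancel: $+1$ from $\Delta_{m+1}$ if $m+1\le i$ plus $+1$ from $\Delta_{m-1}$ gives $+2$ against the $-2$ from $\Delta_m$, and the edge cases $m=i$ get $+1$ from $\Delta_{i-1}$'s right term but lose one $+1$, balanced by $\Delta_i$ contributing $-2$ and $\epsilon_{i+1}$ appearing). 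This confirms $T_{[i]}(\alpha)=\alpha-\epsilon_1+\epsilon_{i+1}$.

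Then for $1\le i<j\le n$ I compute $T_{[i,j]}=T_{[i]}T_{[i+1]}\cdots T_{[j-1]}$, again using commutativity and additivity of the displacements: $T_{[i,j]}(\alpha)=\alpha+\sum_{h=i}^{j-1}(-\epsilon_1+\epsilon_{h+1})=\alpha-(j-i)\epsilon_1+\sum_{h=i}^{j-1}\epsilon_{h+1}=\alpha-(j-i)\epsilon_1+(\epsilon_{i+1}+\epsilon_{i+2}+\cdots+\epsilon_j)$. This is not yet the claimed formula $\alpha-\epsilon_i+\epsilon_j$, so the final step is to recall that in the toppling group $G$ we have the relation $T_1T_2\cdots T_n=1$, equivalently $T^{(1,1,\ldots,1)}=1$, so adding a global constant vector $k(\epsilon_1+\cdots+\epsilon_n)$ to the exponent does not change the group element. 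But here $T_{[i,j]}$ is meant as a specific element of $G$, and one checks that $-(j-i)\epsilon_1+\sum_{h=i+1}^{j}\epsilon_h$ and $-\epsilon_i+\epsilon_j$ differ in $\Z^n$, so this is \emph{not} a consequence of $T_1\cdots T_n=1$; instead the statement of the lemma must intend the action on configurations, and the resolution is that my telescoping above should be redone more carefully. The main obstacle, and the step to get exactly right, is precisely this telescoping computation of $\sum_{k=1}^i\Delta_k$: the correct identity for the path is $\sum_{k=1}^i\Delta_k=\epsilon_{i+1}-\epsilon_i$ (not $\epsilon_{i+1}-\epsilon_1$), since $\sum_{k=1}^i\Delta_k$ is a telescoping sum of first differences of the sequence $(\epsilon_0,\epsilon_1,\ldots)$ under the second-difference operator — write $\Delta_k=(\epsilon_{k+1}-\epsilon_k)-(\epsilon_k-\epsilon_{k-1})$ so that $\sum_{k=1}^i\Delta_k=(\epsilon_{i+1}-\epsilon_i)-(\epsilon_1-\epsilon_0)=\epsilon_{i+1}-\epsilon_i$. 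Hence $T_{[i]}(\alpha)=\alpha-\epsilon_i+\epsilon_{i+1}$, and then $T_{[i,j]}(\alpha)=\alpha+\sum_{h=i}^{j-1}(-\epsilon_h+\epsilon_{h+1})=\alpha-\epsilon_i+\epsilon_j$ by a second telescoping, which is exactly the claim. So the real plan is: (1) write each $\Delta_k$ for $\mathcal{L}$ as a second difference $(\epsilon_{k+1}-\epsilon_k)-(\epsilon_k-\epsilon_{k-1})$ with $\epsilon_0=\epsilon_{n+1}=0$; (2) telescope to get $T_{[i]}(\alpha)=\alpha-\epsilon_i+\epsilon_{i+1}$; (3) telescope again over $h=i,\ldots,j-1$ to get $T_{[i,j]}(\alpha)=\alpha-\epsilon_i+\epsilon_j$; the only subtlety is handling the boundary conventions $\epsilon_0=\epsilon_{n+1}=0$ correctly and noting that $T_{[i]}$ is only defined for $i\le n-1$ so $j\le n$ keeps all indices in range.
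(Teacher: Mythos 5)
Your final argument is correct and is essentially the paper's proof: write out the topplings $T_k$ explicitly for the path, telescope to get $T_{[i]}(\alpha)=\alpha-\epsilon_i+\epsilon_{i+1}$, and telescope again over $h=i,\ldots,j-1$ to get $T_{[i,j]}(\alpha)=\alpha-\epsilon_i+\epsilon_j$. One slip to repair in the write-up: the boundary convention $\epsilon_0=\epsilon_{n+1}=0$ is incompatible with the second-difference form $\Delta_k=(\epsilon_{k+1}-\epsilon_k)-(\epsilon_k-\epsilon_{k-1})$, because $v_1$ has degree $1$, so $\Delta_1=\epsilon_2-\epsilon_1$ and not $\epsilon_2-2\epsilon_1$; under your stated convention the telescoped sum would be $(\epsilon_{i+1}-\epsilon_i)-(\epsilon_1-\epsilon_0)=\epsilon_{i+1}-\epsilon_i-\epsilon_1$, which is not what you claim. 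The convention that makes both the second-difference pattern and the telescoping literally correct is $\epsilon_0:=\epsilon_1$ and $\epsilon_{n+1}:=\epsilon_n$, so that the phantom neighbour exactly compensates the degree deficit at the endpoints; alternatively, just sum the correct $\Delta_k$'s as you listed them in your first paragraph, which directly gives $\sum_{k=1}^i\Delta_k=\epsilon_{i+1}-\epsilon_i$ for $i\leq n-1$. The intermediate detour through the false identity $\sum_{k=1}^i\Delta_k=\epsilon_{i+1}-\epsilon_1$ and the ensuing discussion of the relation $T_1T_2\cdots T_n=1$ should be deleted, as it plays no role once the telescoping is done correctly.
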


\begin{proof}
For this graph we have
\[T_i(\alpha)=\begin{cases}
\alpha-\epsilon_1+\epsilon_2,& \text{ if }i=1;\\
\alpha-2\epsilon_i+\epsilon_{i-1}+\epsilon_{i+1},& \text{ if }2\leq i\leq n-1;\\
\alpha-\epsilon_n+\epsilon_{n-1},& \text{ if }i=n.\end{cases}\]
Then, it is not difficult to see that
\[T_{[i]}(\alpha)=T_1T_2\cdots T_i(\alpha)=\alpha-\epsilon_i+\epsilon_{i+1} \text{ for all }1\leq i\leq n-1,\]
so that
\[T_{[i,j]}(\alpha)=T_{[i]}T_{[i+1]}\cdots T_{[j-1]}(\alpha)=\alpha-\epsilon_i+\epsilon_{j} \text{ for all }1\leq i<j\leq n.\]
\end{proof}
Let us recall that, if $\alpha$ is an integer partition then the Hall-Littlewood symmetric polynomial $R_\alpha(x;t)$ is defined by
\[R_{\alpha}(x;t)=\sum_{w\in\mathfrak{S}_n}w\bigg(x^\alpha\prod_{1\leq i<j\leq n}\frac{x_i-tx_j}{x_i-x_j}\bigg).\]
\begin{thm}
Let $\mathcal{G}=\mathcal{L}$, then we have
\[R_\alpha(x;t)=E\,\hat{\mathcal{K}}_\alpha(1,1,t,1;x)=\lim_{q\to 1}E\,\hat{\mathcal{H}}_\alpha\bigg(1,1,(1-q)t,\frac{q}{q-1};x\bigg).\]
\end{thm}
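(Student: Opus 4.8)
The plan is to collapse both asserted equalities to a single operator identity, evaluate that operator on $x^\alpha$ through the Lemma on lowering operators for $\mathcal{L}$, and then match the resulting Laurent polynomial with the bialternant form of $R_\alpha(x;t)$ by means of the functional $E$. First I would specialise \eqref{tqz2} at $z_1=z_2=1$, $z_3=t$, keeping $q$ generic inside the inverse operator: from $\hat\tau(z_1,z_2,z_3,q)^{-1}=\hat\tau\big(z_1,z_2,(1-q)z_3,\tfrac{q}{q-1}\big)$ and $1-\tfrac{q}{q-1}=\tfrac1{1-q}$ one gets
\[
\hat\tau\Big(1,1,(1-q)t,\tfrac{q}{q-1}\Big)=\prod_{1\le i<j\le n}\frac{1-t\,T_{[i,j]}}{1-(1-q)t\,T_{[i,j]}}.
\]
Acting on $x^\alpha$ and letting $q\to1$, each denominator acts as the identity in the limit, so this tends to $\prod_{i<j}(1-t\,T_{[i,j]})\cdot x^\alpha$, which is exactly $\hat\tau(1,1,t,1)^{-1}\cdot x^\alpha=\hat{\mathcal K}_\alpha(1,1,t,1;x)$ (in \eqref{tqz2} the numerator collapses to $1$ when $q=1$). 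Thus both sides of the theorem equal $E$ applied to $\hat{\mathcal K}_\alpha(1,1,t,1;x)$, and it suffices to identify this symmetric polynomial with $R_\alpha(x;t)$.

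Next I would evaluate on $\mathcal{L}$. By that Lemma, $T_{[i,j]}\cdot x^\alpha=x^{\alpha-\epsilon_i+\epsilon_j}=(x_j/x_i)\,x^\alpha$, and the $T_{[i,j]}$ commute, so
\[
\hat{\mathcal K}_\alpha(1,1,t,1;x)=\prod_{1\le i<j\le n}\Big(1-t\,\frac{x_j}{x_i}\Big)x^\alpha=x^{-\delta}\,x^\alpha\prod_{1\le i<j\le n}(x_i-tx_j),
\]
where $\delta=(n-1,n-2,\dots,1,0)$ and I used $\prod_{i<j}x_i=x^\delta$. On the other side, put $a_\delta(x)=\prod_{i<j}(x_i-x_j)$ and pull $a_\delta$ through the symmetrisation defining $R_\alpha$; since $w(a_\delta)=\operatorname{sgn}(w)a_\delta$ this gives
\[
a_\delta(x)\,R_\alpha(x;t)=\sum_{w\in\mathfrak S_n}\operatorname{sgn}(w)\,w\!\Big(x^\alpha\prod_{1\le i<j\le n}(x_i-tx_j)\Big).
\]

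It remains to compare. Expand $\hat{\mathcal K}_\alpha(1,1,t,1;x)=\sum_\beta c_\beta x^\beta$, a finite Laurent sum; then $x^\alpha\prod_{i<j}(x_i-tx_j)=x^\delta\sum_\beta c_\beta x^\beta$, so the displayed formula for $a_\delta R_\alpha$ becomes $\sum_\beta c_\beta\sum_{w}\operatorname{sgn}(w)\,x^{w(\beta+\delta)}=\sum_\beta c_\beta\,a_{\beta+\delta}(x)$. Using linearity of $E$, $E\hat{\mathcal K}_\alpha(1,1,t,1;x)=\sum_\beta c_\beta E(x^\beta)$, and here I would invoke the classical bialternant identity $E(x^\beta)=s_\beta(x)=a_{\beta+\delta}(x)/a_\delta(x)$ for $\beta\in\N^n$ — the sign rule \eqref{s_inv} is precisely its reflection — noting that when $\alpha$ contains $\delta$ every $\beta$ occurring has $\beta_k\ge\alpha_k-(n-k)\ge0$, so only such $\beta$ appear; more generally one reads $E(x^\beta)$ through the extension of $s_\beta$ by \eqref{s_inv}. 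Then $a_\delta\cdot E\hat{\mathcal K}_\alpha(1,1,t,1;x)=\sum_\beta c_\beta a_{\beta+\delta}=a_\delta\,R_\alpha(x;t)$, and dividing by $a_\delta$ gives the first equality; the limit form is then immediate from the reduction step.

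The main obstacle is exactly this last point: one must check that applying $E$ monomial-by-monomial to $\hat{\mathcal K}_\alpha(1,1,t,1;x)$ genuinely reconstructs the full alternant $\operatorname{Alt}\!\big(x^\delta\hat{\mathcal K}_\alpha(1,1,t,1;x)\big)$, i.e. that the bialternant description of $E$ is the operative one on every monomial that appears — equivalently, that monomials with a negative exponent do not occur (as is automatic once $\alpha\supseteq\delta$). Everything else is the specialisation of \eqref{tqz2}, the Vandermonde identity $\prod_{i<j}(x_i-tx_j)=x^\delta\prod_{i<j}(1-tx_j/x_i)$, and the antisymmetry $w(a_\delta)=\operatorname{sgn}(w)a_\delta$.
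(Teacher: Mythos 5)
Your reduction of both asserted equalities to the single identity $E\,\hat{\mathcal{K}}_\alpha(1,1,t,1;x)=R_\alpha(x;t)$ (via the specialisation of \eqref{tqz2} and the $q\to 1$ limit), and the computation $\hat{\mathcal{K}}_\alpha(1,1,t,1;x)=\prod_{i<j}(1-t\,x_j/x_i)\,x^\alpha$ on $\mathcal{L}$, coincide with the paper's. Where you diverge is in the last step: the paper simply quotes the lowering-operator formula \eqref{lowR} from Macdonald and stops, whereas you re-derive that formula from the definition of $R_\alpha$ by multiplying through by the Vandermonde $a_\delta(x)=\prod_{i<j}(x_i-x_j)$ and invoking the bialternant expression for Schur polynomials. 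Your route is self-contained and has the virtue of exposing exactly where the argument is delicate; the paper's is shorter but buries the same delicacy inside the citation.

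The point you flag at the end is a genuine gap, and your fallback (``read $E(x^\beta)$ through the extension of $s_\beta$ by \eqref{s_inv}'') does not close it: $E$ is \emph{defined} to annihilate every monomial $x^\beta$ with $\beta\notin\N^n$, whereas the straightened alternant $a_{\beta+\delta}/a_\delta$ occurring in your expansion of $a_\delta R_\alpha$ is in general nonzero for such $\beta$. Indeed every $\beta$ arising here has the form $\alpha+\sigma$ with $\sigma$ a sum of vectors $\epsilon_j-\epsilon_i$ ($i<j$), so $\beta_i+n-i\ge\alpha_i\ge 0$ and $a_{\beta+\delta}/a_\delta=\pm s_\mu$ for an honest partition $\mu$, even when some $\beta_i<0$. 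Concretely, for $n=2$ and $\alpha=(0,0)$ one has $\hat{\mathcal{K}}_{(0,0)}(1,1,t,1;x)=1-t\,x_2/x_1$, hence $E\,\hat{\mathcal{K}}_{(0,0)}=1$, while $R_{(0,0)}(x;t)=1+t$; the missing contribution $t=-t\,a_{(0,1)}/a_{(1,0)}$ is carried by the monomial $x_1^{-1}x_2$, which $E$ kills. So the identity holds exactly when every $\beta$ occurring with $a_{\beta+\delta}\neq 0$ lies in $\N^n$ --- guaranteed when $\alpha_i\ge n-i$ for all $i$, as you observe --- and not for arbitrary partitions. You should not feel singled out: the paper's own proof needs the same restriction, since \eqref{lowR} read with the convention $s_{R(\alpha)}=0$ for $R(\alpha)\notin\N^n$ fails on the same example; the unconditional form of that formula uses the straightening $s_\beta=a_{\beta+\delta}/a_\delta$ rather than truncation. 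If you either restrict to $\alpha_i\ge n-i$ or replace $E$ by the straightened evaluation, your argument is complete.
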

\begin{proof}
For all $1\leq i<j\leq n$ let $R_{j,i}\colon\Z^n\to\Z^n$ denote the lowering operator defined by $R_{j,i}(\alpha)=\alpha-\epsilon_i+\epsilon_j$. From \cite{M95} we recover
\begin{equation}\label{lowR}
R_\alpha(x;t)=\bigg\{\prod_{1\leq i<j\leq n}(1-tR_{j,i})\bigg\}\cdot s_\alpha(x),
\end{equation}
where $R\cdot s_\alpha(x)=s_{R(\alpha)}$ for any product $R$ of lowering operators, and $s_{R(\alpha)}=0$ if $R(\alpha)\notin\N^n$. Observe that \eqref{lowR} can rewritten  as
\begin{equation}\label{lowRbis}
R_\alpha(x;t)=E\,\bigg\{\prod_{1\leq i<j\leq n}(1-tR_{j,i})\bigg\}\cdot x^\alpha,
\end{equation}
where we set $R\cdot x^\alpha=x^{R(\alpha)}$ for any product $R$ of lowering operators. Now, set $\mathcal{G}=\mathcal{L}$ so that we have by the above Lemma $T_{[i,j]}=R_{j,i}$ for all $1\leq i<j\leq n$. Then, \eqref{lowRbis} implies
\begin{multline*} R_\alpha(x;t)=E\,\bigg\{\prod_{1\leq i<j\leq n}(1-tT_{[i,j]})\bigg\}\cdot x^\alpha\\
=E\,\hat{\mathcal{K}}_\alpha(1,1,t,1;x)=\lim_{q\to 1}E\,\hat{\mathcal{H}}_\alpha\bigg(1,1,(1-q)t,\frac{q}{q-1};x\bigg).
\end{multline*}
\end{proof}
\begin{rem}[Toppling dominance for $\mathcal{L}$]
A further consequence of the fact that $T_{[i,j]}$ equals the lowering operator $R_{j,i}$ is that the restriction of toppling dominance to the set of all integer partitions of at most $n$ parts reduces, when $\mathcal{G}=\mathcal{L}$, to the classical dominance ordering.
\end{rem}
The coefficient of $s_\beta(x)$ in $R_\alpha(x;t)$ admits a description in terms of square free decompositions of elements in the toppling group of $\mathcal{L}$. More precisely, we recover
\[R_\alpha(x;t)=\sum_{\beta\leq \alpha}C'_{\alpha,\beta}(1,1,t,1) s_{\beta}(x).\]
Now, if $\lambda\in P_n$ is such that $T^\lambda(\alpha)=\beta$ then we can write
\[C'_{\alpha,\beta}(1,1,t,1)=\sum_{\ell_3} (-t)^{\ell_3},\]
where $\ell_3$ ranges over the lengths of all square free decompositions of $T^\lambda$. This gives us the following formula for Hall-Littlewood symmetric polynomials.
\begin{thm}\label{HLcomb}
Let $\alpha$ be an integer partition with at most $n$ parts. Then, we have
\[R_\alpha(x;t)=\sum_{\beta\leq \alpha}\left(\sum_{\ell_3} (-t)^{\ell_3}\right) s_{\beta}(x),\]
where $\ell_3$ ranges over all lengths of all square free decompositions of the unique $T^\lambda$ such that $T^\lambda(\alpha)=\beta$.
\end{thm}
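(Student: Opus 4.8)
The plan is to derive Theorem~\ref{HLcomb} directly from the preceding theorem identifying $R_\alpha(x;t)$ with $E\,\hat{\mathcal{K}}_\alpha(1,1,t,1;x)$, together with the combinatorial description of the coefficients $C'_{\alpha,\beta}$ already obtained in Section~4. First I would recall that
\[
\hat{\mathcal{K}}_\alpha(z_1,z_2,z_3,q;x)=\sum_{\beta\leq\alpha}C'_{\alpha,\beta}(z_1,z_2,z_3,q)\,x^\beta,
\qquad
C'_{\alpha,\beta}(z_1,z_2,z_3,q)=\sum_{\ell_1,\ell_2,\ell_3,d}z_1^{\ell_1}z_2^{\ell_2}z_3^{\ell_3}(1-q)^{\ell_3-d}(-q)^d,
\]
where the sum is over all pairwise distinct decompositions of the unique $T^\lambda$ with $T^\lambda(\alpha)=\beta$. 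Applying $E$ termwise and using $E(x^\beta)=s_\beta(x)$ (with $s_\beta(x)=0$ when $\beta\notin\N^n$, and $s_\beta(x)=\pm s_\mu(x)$ otherwise by \eqref{s_inv}) already gives $R_\alpha(x;t)=\sum_{\beta\leq\alpha}C'_{\alpha,\beta}(1,1,t,1)\,s_\beta(x)$, which is the displayed intermediate formula in the text. So the real content is the evaluation $C'_{\alpha,\beta}(1,1,t,1)=\sum_{\ell_3}(-t)^{\ell_3}$ over square free decompositions.

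The key step is the specialization $z_1=z_2=1$, $z_3=t$, $q=1$. Setting $z_1=z_2=1$ collapses the weight of each decomposition to $t^{\ell_3}(1-q)^{\ell_3-d}(-q)^d$; then setting $q=1$ kills the factor $(1-q)^{\ell_3-d}$ unless $\ell_3=d$, in which case it equals $1$ and $(-q)^d=(-1)^{\ell_3}$. Since $\ell_3$ is the total length of a decomposition and $d$ is the number of \emph{distinct} generators occurring in it, the condition $\ell_3=d$ says precisely that every generator $T_{[i,j]}$ appears with multiplicity at most one, i.e. the decomposition is square free in the sense defined in Section~4. Hence only square free decompositions survive, each contributing $(-1)^{\ell_3}t^{\ell_3}=(-t)^{\ell_3}$, and
\[
C'_{\alpha,\beta}(1,1,t,1)=\sum_{\ell_3}(-t)^{\ell_3},
\]
the sum ranging over lengths of square free decompositions of $T^\lambda$. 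Substituting this into the intermediate formula yields exactly the asserted identity.

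I would present the argument in two short movements: (i) invoke the previous theorem and the linearity of $E$ to get $R_\alpha(x;t)=\sum_{\beta\leq\alpha}C'_{\alpha,\beta}(1,1,t,1)s_\beta(x)$; (ii) specialize the explicit polynomial formula for $C'_{\alpha,\beta}$, observing that the factor $(1-q)^{\ell_3-d}$ forces $\ell_3=d$ at $q=1$, which is the square free condition. One should note that the sum over $\beta\leq\alpha$ with $\beta\in\N^n$ is genuinely finite (the order ideal $\mathcal{H}_\alpha$ meets $\N^n$ in finitely many points when $\alpha$ is a partition), so applying $E$ termwise is legitimate. I do not expect any real obstacle here: the theorem is essentially a corollary of material already developed, and the only point requiring care is the bookkeeping that $\ell_3=d$ is equivalent to square freeness, which is immediate from the definitions of $\ell_3$ and $d$ given earlier. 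A one-line remark reconciling the notation — that $\lambda\in P_n$ is the unique element with $T^\lambda(\alpha)=\beta$, guaranteed by Theorem~\ref{thm:Yam_main} — rounds out the proof.
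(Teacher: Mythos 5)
Your proposal is correct and follows essentially the same route as the paper, which obtains the theorem in the prose immediately preceding it: apply $E$ to the identity $R_\alpha(x;t)=E\,\hat{\mathcal{K}}_\alpha(1,1,t,1;x)$ and then specialize $C'_{\alpha,\beta}$, noting that the factor $(1-q)^{\ell_3-d}$ at $q=1$ kills every non--square-free decomposition and leaves $(-t)^{\ell_3}$ for each square free one. Your added remark on the finiteness of the surviving sum after applying $E$ is a small but legitimate point of extra care.
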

The most interesting transition matrix involving Hall-Littlewood symmetric polynomials arises by expanding Schur polynomials in terms of a normalized version of the $R_\alpha(x;t)$'s which is usually denoted $P_\alpha(x;t)$ \cite{M95}. The entries $K_{\lambda,\alpha}(t)$'s of this matrix, often called $t$-Kostka polynomials, are polynomials in $t$ with positive integer coefficients. A celebrated combinatorial description, due to Lascoux and Sch\"utzenberger \cite{LS}, expresses $K_{\lambda,\alpha}(t)$ as an enumeration of semistandard Young tableaux of shape $\lambda$ and weight $\alpha$ with respect to the \textit{charge} statistic. More recently, Haglund, Haiman, Loher and others have developed a beautiful combinatorial theory for Macdonald polynomials \cite{Hag,Hai,HHL05,HHL08}. This framework provides a new explanation of Lascoux-Schutzenberger's result and extend the combinatorial description from Hall-Littlewood symmetric polynomials to Macdonald polynomials and their non-symmetric generalizations. The problem of finding a satisfactory combinatorial description of $t$-Kostka polynomials, as well as that of finding an interpretation of Macdonald polynomials in terms of the toppling game, still remains open. When $t=1$, $K_{\lambda,\alpha}(t)$ reduces to the Kostka number $K_{\lambda,\alpha}$. We close this section by giving an expression of Kostka numbers in terms of the coefficients $C_{\alpha,\beta}$'s.
\begin{thm}
Let $\mathcal{G}=\mathcal{L}$, let $\lambda$ and $\mu$ be integer partitions with at most $n$ parts, and assume $\lambda$ dominates $\mu$. Then, we have
\[K_{\lambda,\mu}=\sum_{w}(-1)^w C_{\mu,w(\lambda)+w(\delta)-\delta},\]
with $w$ ranging over the symmetric group on $1,2,\ldots,n$ and with $\delta=(n-1,n-2,\ldots,0)$.
\end{thm}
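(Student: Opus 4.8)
The plan is to start from the classical expansion of the Hall--Littlewood symmetric polynomial $R_\mu(x;t)$ in the Schur basis and then specialize $t=1$. By Theorem~\ref{HLcomb}, for $\mathcal{G}=\mathcal{L}$ we have
\[
R_\mu(x;t)=\sum_{\beta\leq\mu}C'_{\mu,\beta}(1,1,t,1)\,s_\beta(x),
\]
where the coefficient $C'_{\mu,\beta}(1,1,t,1)$ records $\sum_{\ell_3}(-t)^{\ell_3}$ over square free decompositions of the unique $T^\nu$ with $T^\nu(\mu)=\beta$. Setting $t=1$, the right-hand side becomes $\sum_{\beta\leq\mu}C'_{\mu,\beta}(1,1,1,1)\,s_\beta(x)$, and one must relate $C'_{\mu,\beta}(1,1,1,1)$ to $C_{\mu,\beta}$. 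From the inversion relation $\hat{\tau}(z_1,z_2,z_3,q)^{-1}=\hat{\tau}(z_1,z_2,(1-q)z_3,\tfrac{q}{q-1})$ and the identity $C'_{\alpha,\beta}(z_1,z_2,z_3,q)=C_{\alpha,\beta}(z_1,z_2,(1-q)z_3,\tfrac{q}{q-1})$, a direct substitution of $z_1=z_2=z_3=1$ and then $q\to 1$ is delicate because of the pole in $\tfrac{q}{q-1}$; the cleaner route is to observe that $\hat{\tau}(1,1,1,1)^{-1}\cdot x^\mu$ is still given by $\hat{\tau}$ applied to a sign-modified series, and at $t=1$ the operator $\prod_{i<j}(1-T_{[i,j]})$ applied to $x^\mu$ has, as the coefficient of $x^\beta$, precisely the signed count of square free decompositions; but at $t=1$ this is exactly the inverse of $\hat{\tau}=\prod_{i<j}\frac{1}{1-T_{[i,j]}}$, so $C'_{\mu,\beta}(1,1,1,1)$ is the entry of the \emph{inverse} of the matrix $(C_{\mu,\beta})$.

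Next I would invoke the standard fact about the $t=1$ Hall--Littlewood polynomials: $R_\mu(x;1)$ equals the monomial symmetric function times a product of $(1-1)=0$ factors for repeated parts, but more usefully, since $P_\mu(x;t)=R_\mu(x;t)/\prod_i\prod_{k=1}^{m_i(\mu)}\frac{1-t^k}{1-t}$ and $P_\mu(x;1)=m_\mu(x)$, one has $s_\lambda(x)=\sum_\mu K_{\lambda,\mu}(t)P_\mu(x;t)$ and at $t=1$ this reduces to $s_\lambda=\sum_\mu K_{\lambda,\mu}\,m_\mu$. The key is therefore to express $m_\mu$ in terms of $s_\beta$'s and then invert using the $C$'s. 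Concretely, the inverse Kostka relation gives $m_\mu=\sum_\beta (K^{-1})_{\mu,\beta}s_\beta$, and by the Jacobi--Trudi antisymmetry \eqref{s_inv} together with the classical formula $m_\mu=\sum_{w}(-1)^w h_{w(\mu+\delta)-\delta}$ (the dual Jacobi--Trudi / Lindström--Gessel--Viennot sign expansion), one can write $m_\mu$ as an alternating sum over $\mathfrak{S}_n$ of $s$-functions indexed by $w(\mu)+w(\delta)-\delta$. Applying the functional $E$ and using that, at $t=1$, $R_\beta(x;1)$ sits in the span with transition matrix $C_{\mu,\beta}$, one gets the stated alternating sum $K_{\lambda,\mu}=\sum_w(-1)^w C_{\mu,\,w(\lambda)+w(\delta)-\delta}$ after matching the two transition matrices.

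The key steps, in order, are: (i) specialize Theorem~\ref{HLcomb} at $t=1$ to identify $C'_{\mu,\beta}(1,1,1,1)$ as the inverse transition matrix from $\{s_\beta\}$ to $\{\hat{\mathcal{H}}_\mu\text{-images}\}$, i.e. the matrix inverse of $(C_{\mu,\beta})_{\beta\leq\mu}$; (ii) recall $s_\lambda=\sum_\mu K_{\lambda,\mu}m_\mu$ and the dual Jacobi--Trudi expansion $m_\mu=\sum_w(-1)^w s_{w(\mu+\delta)-\delta}$, valid after the straightening rule \eqref{s_inv}; (iii) combine: write $s_\lambda$ in the $s_\beta$ basis trivially and in the $m_\mu$-then-$s_\beta$ chain, then apply the matrix $(C_{\mu,\beta})$ to pass from the $\hat{\mathcal{H}}$-side back to Schur functions, producing $K_{\lambda,\mu}=\sum_w(-1)^w C_{\mu,w(\lambda)+w(\delta)-\delta}$; (iv) check that all index shifts $w(\lambda)+w(\delta)-\delta$ lie in $\mathbb{Z}^n$ and that the convention $s_\alpha=0$ for $\alpha\notin\mathbb{N}^n$ together with \eqref{s_inv} makes the right-hand side well-defined and finite.

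The main obstacle I anticipate is step (iii): carefully justifying that the matrix $(C_{\mu,\beta})$ — which by construction is the transition matrix expressing $\hat{\mathcal{H}}_\mu(x)$ in monomials $x^\beta$, equivalently (after applying $E$) expressing a certain Schur-like object in Schur functions — is \emph{exactly} the inverse of the matrix appearing in the $m$-to-$s$ expansion at $t=1$, including getting the signs and the dominance-order triangularity to line up. In particular one must verify that the lowering operators $T_{[i,j]}=R_{j,i}$, acting on $x^\mu$ and then straightened via \eqref{s_inv}, reproduce the alternating $\mathfrak{S}_n$-sum with the shift by $\delta=(n-1,n-2,\dots,0)$ rather than some other shift, and that the limit $q\to 1$ (resp. $t=1$) does not introduce spurious cancellations. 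Once the triangular matrix $(C_{\mu,\beta})$ and its inverse are correctly identified, the theorem follows by a formal manipulation of transition matrices; the bookkeeping of signs under the Weyl-group action is the only genuinely technical point.
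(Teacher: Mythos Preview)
Your approach has a genuine gap and is unnecessarily circuitous. The formula you invoke in step (ii), namely $m_\mu=\sum_{w}(-1)^w h_{w(\mu+\delta)-\delta}$, is not a formula for the monomial symmetric function: that alternating sum is precisely the Jacobi--Trudi determinant and equals $s_\mu$, not $m_\mu$. There is no such simple closed alternating-sum expression for $m_\mu$ in the $h$- or $s$-basis (the inverse Kostka matrix is given by special rim-hook tableaux, not by a single Weyl-group sum). Consequently the chain in step (iii) collapses, and the argument does not produce the stated identity. Going through Hall--Littlewood polynomials at $t=1$ also introduces the normalization $R_\mu(x;1)=\big(\prod_i m_i(\mu)!\big)\,m_\mu(x)$, which you have not accounted for.

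The paper's proof is much more direct and avoids Hall--Littlewood polynomials entirely. For $\mathcal{G}=\mathcal{L}$ one has $T_{[i,j]}\cdot x^\alpha=(x_j/x_i)\,x^\alpha$, so $\hat{\tau}$ acts on monomials as multiplication by $\prod_{i<j}(1-x_j/x_i)^{-1}$. The bialternant (Weyl) formula for the Schur polynomial then reads
\[
s_\lambda(x)=\prod_{i<j}\Big(1-\tfrac{x_j}{x_i}\Big)^{-1}\sum_{w\in\mathfrak{S}_n}(-1)^w x^{w(\lambda+\delta)-\delta}
=\hat{\tau}\cdot\sum_{w}(-1)^w x^{w(\lambda)+w(\delta)-\delta}.
\]
Expanding $\hat{\tau}\cdot x^\alpha=\sum_{\beta\leq\alpha}C_{\alpha,\beta}x^\beta$ and comparing with $s_\lambda=\sum_\mu K_{\lambda,\mu}m_\mu$ at the coefficient of $x^\mu$ (which picks out exactly $K_{\lambda,\mu}$, since for a partition $\mu$ the monomial $x^\mu$ appears in $m_\nu$ only when $\nu=\mu$) yields the theorem in one line. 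The missing idea in your plan is this identification of $\hat{\tau}$ with the inverse Vandermonde factor, which makes the Weyl formula do all the work; no Hall--Littlewood specialization, matrix inversion, or $m$-to-$s$ expansion is needed.
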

\begin{proof}
The classical definition of the Schur polynomial states that
\[s_\lambda(x)=\frac{\det(x_i^{\lambda_j+n-j})_{1\leq i,j\leq n}}{\det(x_i^{n-j})_{1\leq i,j\leq n}}=\det(x_i^{\lambda_j+i-j})_{1\leq i,j\leq n}\prod_{i<j}\bigg(1-\frac{x_j}{x_i}\bigg)^{-1}.\]
By expanding each factor $(1-x_j/x_i)^{-1}$ as a formal power series in $x_j/x_i$ the identity above still is true and, in particular, we obtain
\begin{align*}s_\lambda(x)&=\prod_{1\leq i<j\leq n}\frac{1}{1-\frac{x_j}{x_i}}\,\det(x_i^{\lambda_j+n-j})_{1\leq i,j\leq n}\\
&= \hat{\tau}\cdot\bigg(\sum_{w\in\mathfrak{S}_n}(-1)^w x^{w(\lambda)+w(\delta)-\delta}\bigg)\\
&=\sum_{w}\sum_{\beta\leq w(\lambda)+w(\delta)-\delta} (-1)^w\,C_{\beta,w(\lambda)+w(\delta)-\delta}\,x^\beta,
\end{align*}
where $w$ ranges over all permutations of $1,2,\ldots,n$ and $\delta=(n-1,n-2,\ldots,0)$. On the other hand, it is well known that
\[s_\lambda(x)=\sum_{\mu}K_{\lambda,\mu}m_\mu(x).\]
By comparing the coefficient of $x^\mu$ we recover
\[K_{\lambda,\mu}=\sum_{w}(-1)^w C_{\mu,w(\lambda)+w(\delta)-\delta}.\]
\end{proof}
\section{Classical orthogonal polynomials and Yamanouchi toppling}
Let us recall the notion of orthogonal polynomial system \cite{C78}. Assume that a linear functional $L\colon\mathbb{R}[t]\to\mathbb{R}$ is given. An \textit{orthogonal polynomial system} associated with $L$ is a polynomial sequence $\{p_n(t)\}_{n\in\N}$ such that $p_n(t)\in\mathbb{R}[t]$ and  $deg\,p_n=n$ for all $n\in\N$, and such that
\[L\,p_n(t)p_m(t)=0 \text{ if and only if }n\neq m.\]
Let $n$ be a positive integer and let $\mathcal{L}_n$ denote the simple path with $n$ vertices and with edges
\[\{v_1,v_2\},\{v_2,v_3\},\ldots,\{v_{n-1},v_n\}.\]
Denote by $\hat{\tau}_n$ the operator $\hat{\tau}$ relative to $\mathcal{L}_n$, that is
\[\hat{\tau}_n=\prod_{1\leq i<j\leq n}\frac{1}{1-T_{[i,j]}}.\]
Moreover, set $\boldsymbol{x}_n=\{x_1,x_2,\ldots,x_n\}$ and, for all $\alpha\in\Z^n$, define $q_\alpha(\boldsymbol{x}_n)$ to be the unique polynomial such that
\begin{equation}\label{q} \hat{\tau}_n\cdot q_\alpha(\bs{x}_n)=\bs{x}_n^\alpha,\end{equation}
where we set $\bs{x}_n^\alpha=x_1^{\alpha_1}x_2^{\alpha_2}\cdots x_n^{\alpha_n}$. Recall that we have
\[\hat{\tau}_n^{-1}=\prod_{1\leq i<j\leq n}(1-T_{[i,j]})=\sum_{\lambda\in P_n}\bigg(\sum_{\ell_3}(-1)^{\ell_3}\bigg)T^\lambda,\]
where $\ell_3$ ranges over all lengths of all square free decompositions of the fixed $T^\lambda$. Thus, we recover the following combinatorial formula for $q_\alpha(\bs{x}_n)$,
\[q_\alpha(\boldsymbol{x}_n)=\sum_{\beta\leq \alpha}\bigg(\sum_{\ell_3}(-1)^{\ell_3}\bigg)x^\beta.\]
Since the size of a configuration is preserved by any toppling sequence, then $q_\alpha(\boldsymbol{x}_n)$ is a homogeneous polynomial in $x_1,x_2,\ldots,x_n$ of total degree $|\alpha|$. To show how the polynomials $q_\alpha(\boldsymbol{x}_n)$'s are related to orthogonal polynomials systems we need to manipulate polynomials with an arbitrary large number of variables at the same time. To this aim, we set $\boldsymbol{x}=\{x_1,x_2,\ldots\}$ and define
\[\mathbb{R}[\boldsymbol{x}]=\bigcup_{n\geq 1}\mathbb{R}[\boldsymbol{x}_n].\]
Moreover, we will use maps $E\colon \mathbb{R}[\boldsymbol{x}]\to\mathbb{R}$ such that
\begin{enumerate}
\item for all $n\geq 1$ the restriction $E\colon\mathbb{R}[\boldsymbol{x}_n]\to\mathbb{R}$ is a linear functional;
\item for all $n\geq 2$, for all $p\in\mathbb{R}[\boldsymbol{x}_n]$ and for all $w\in\mathfrak{S}_n$,
\[E\,p(x_{w(1)},x_{w(2)},\ldots,x_{w(n)})=E\,p(x_1,x_2,\ldots,x_n).\]
\end{enumerate}
We will name $E$ \textit{symmetric functional}. Once that a symmetric functional $E$ is given, for all $i\geq 1$ we may define a \textit{conditional operator} $E_i\colon \mathbb{R}[\boldsymbol{x}]\to\mathbb{R}[x_i]$. Such an operator is uniquely determined by
\[E_i\,\boldsymbol{x}_n^\alpha=x_i^{\alpha_i} E \boldsymbol{x}_n^\alpha x_i^{-\alpha_i} \text{ for all }n\in\N,\text{ and for all }\alpha\in\N^n.\]
Roughly speaking, $E_i$ acts on $\mathbb{R}[x_1,\ldots, x_{i-1},x_{i+1},\ldots]$ as $E$ acts, and fixes each polynomial in $\mathbb{R}[x_i]$. We will say that the variables $x_1,x_2,\ldots$ are \textit{independent} with respect to the functional $E$ if and only if $E=E\,E_i$ for all $i\geq 1$, that is if and only if
\[EE_i\,p=E\,p, \text{ for all }p\in\mathbb{R}[\boldsymbol{x}]\text{ and for all }i\geq 1.\]
Note that the degree of $E_i\,q_\alpha(\bs{x}_n)$ does not exceed the maximum $k\in\N$ such that $x_i^k$ occurs in $q_\alpha(x)$. Hence, we define $\{p_n(t)\}_{n\in\mathbb{N}}$ to be the unique polynomial sequence such that $p_0(t)=1$, and such that
\begin{equation}\label{p_n}
p_{n}(x_{n+1})=E_{n+1}\,q_{(n,n,\ldots,n,0)}(\bs{x}_{n+1}),\text{ for all }n\geq 1.
\end{equation}
The following Theorem states an orthogonality relation for the polynomials  defined in \eqref{p_n}.
\begin{thm}\label{thm:orth}
Let $E\colon\mathbb{R}[\bs{x}]\to\mathbb{R}$ be a symmetric functional, let $x_1,x_2,\ldots$ be independent with respect to $E$, and let $\{p_n(t)\}_{n\in\N}$ denote the unique polynomial sequence such that $p_0(t)=1$ and satisfying \eqref{p_n}. Then, for all $x_i\in\bs{x}$ we have
\[E\,p_n(x_{i})p_m(x_{i})=0 \text{ for all }n,m\in\N \text{ such that }n\neq m.\]
Moreover, if $\deg\,p_n=n$ for all $n\in\N$ then we also have
\[E\,p_n(x_{i})p_n(x_{i})\neq 0 \text{ for all }n\in\N.\]
\end{thm}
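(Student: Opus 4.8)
The plan is to specialise everything to the path $\mathcal{G}=\mathcal{L}_{n+1}$, make the polynomial $q_{(n,\ldots,n,0)}(\bs{x}_{n+1})$ completely explicit, apply $E_{n+1}$ so as to recognise $p_n$ as the classical determinantal (Heine) representative of the orthogonal polynomials attached to the moment sequence $a_k:=E\,x_1^k$, and then read orthogonality off that determinant. First I would compute $q_\alpha$ on the path. By the Lemma above $T_{[i,j]}(\alpha)=\alpha-\epsilon_i+\epsilon_j$, so $T_{[i,j]}\cdot\bs{x}_{n+1}^\alpha=\bs{x}_{n+1}^\alpha\,x_j/x_i$; since $\hat{\tau}_{n+1}^{-1}=\prod_{1\le i<j\le n+1}(1-T_{[i,j]})$ is a finite product of commuting factors, expanding it gives
\[
q_\alpha(\bs{x}_{n+1})=\hat{\tau}_{n+1}^{-1}\cdot\bs{x}_{n+1}^\alpha
=\bs{x}_{n+1}^\alpha\prod_{1\le i<j\le n+1}\Bigl(1-\frac{x_j}{x_i}\Bigr)
=\bs{x}_{n+1}^{\,\alpha-\delta}\,\Delta(\bs{x}_{n+1}),
\]
where $\delta=(n,n-1,\ldots,1,0)$ and $\Delta(\bs{x}_{n+1})=\prod_{i<j}(x_i-x_j)=\det(x_i^{\,n+1-j})_{1\le i,j\le n+1}$. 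For $\alpha=(n,n,\ldots,n,0)$ one has $\alpha-\delta=(0,1,2,\ldots,n-1,0)\in\N^{n+1}$, so $q_{(n,\ldots,n,0)}(\bs{x}_{n+1})=x_2x_3^2\cdots x_n^{\,n-1}\,\Delta(\bs{x}_{n+1})$ really is a polynomial (homogeneous of degree $n^2$), and $E_{n+1}$ applies to it.

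Next I would push $E_{n+1}$ through this expression. Scaling, for $i=1,\ldots,n$, the $i$th row of $\det(x_i^{\,n+1-j})$ by $x_i^{\,i-1}$ (leaving the last row untouched) absorbs the monomial prefactor, so $q_{(n,\ldots,n,0)}(\bs{x}_{n+1})=\det M$ with $M_{ij}=x_i^{\,n+i-j}$ for $1\le i\le n$ and $M_{n+1,j}=x_{n+1}^{\,n+1-j}$. Every term of the Leibniz expansion of $\det M$ is a signed monomial $\pm\bigl(\prod_{i=1}^{n}x_i^{\,n+i-\sigma(i)}\bigr)x_{n+1}^{\,n+1-\sigma(n+1)}$; applying $E_{n+1}$ fixes the $x_{n+1}$ power and, because the $x_i$ are independent with respect to $E$ and $E$ is symmetric, replaces each factor $\prod_{i=1}^{n}x_i^{e_i}$ by $\prod_{i=1}^{n}a_{e_i}$. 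Reassembling the terms then shows, with $t=x_{n+1}$,
\[
p_n(t)=\det B_n(t),\qquad
B_n(t)=\begin{pmatrix}
a_n&a_{n-1}&\cdots&a_0\\
a_{n+1}&a_n&\cdots&a_1\\
\vdots&&&\vdots\\
a_{2n-1}&a_{2n-2}&\cdots&a_{n-1}\\
t^{n}&t^{n-1}&\cdots&1
\end{pmatrix},
\]
which, after reversing the columns, is exactly the classical Heine determinant for the orthogonal polynomials of the moment functional $t^k\mapsto a_k$.

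Orthogonality then falls out of $B_n(t)$ by elementary row manipulations. Since $\det B_n(t)$ is linear in its last row, $E\bigl(x_i^{\,k}p_n(x_i)\bigr)$ is the determinant obtained from $B_n$ by replacing its last row by $(a_{n+k},a_{n+k-1},\ldots,a_k)$; for $0\le k\le n-1$ this row equals the $(k+1)$st row of $B_n$, so the determinant has two equal rows and $E\,x_i^{\,k}p_n(x_i)=0$. Given $m<n$, writing $p_m(t)=\sum_{k\le m}c_k t^k$ (a one-variable polynomial of degree $\le m\le n-1$) and using linearity of $E$ gives $E\,p_n(x_i)p_m(x_i)=\sum_{k\le m}c_k\,E\,x_i^{\,k}p_n(x_i)=0$, which (the case $m=0$, $p_0=1$, included) is the first assertion. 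For the last assertion, the leading coefficient of $p_n$ equals, up to sign, the Hankel determinant $D_{n-1}=\det(a_{p+q})_{0\le p,q\le n-1}$; if $\deg p_n=n$ for all $n$ then all Hankel determinants are nonzero, and combining the orthogonality just proved with the computation $E\,x_i^{\,n}p_n(x_i)=\pm D_n$ (replace the last row of $B_n$ by $(a_{2n},\ldots,a_n)$ and reverse columns) yields $E\,p_n(x_i)^2=(\text{leading coefficient of }p_n)\cdot E\,x_i^{\,n}p_n(x_i)=\pm D_{n-1}D_n\ne0$.

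The only genuinely non-formal point is the interchange in the second paragraph: that $E_{n+1}$, applied termwise to the Leibniz expansion, converts the Vandermonde-type determinant $\det M$ into the Hankel-type determinant $\det B_n(t)$. This is precisely where the hypotheses that $E$ is a symmetric functional and that the $x_i$ are independent are used in an essential way — they are exactly what is needed to pass from $E\bigl(\prod_i x_i^{e_i}\bigr)$ to $\prod_i a_{e_i}$. Once $p_n(t)=\det B_n(t)$ is in hand, everything else is the classical linear algebra of Hankel determinants, and one may alternatively just invoke the standard theory of \cite{C78} from that point on.
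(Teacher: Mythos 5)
Your proof is correct, but it reaches the orthogonality by a genuinely different route from the paper's. Both arguments start from the same explicit formula $q_{(n,\ldots,n,0)}(\bs{x}_{n+1})=x_2x_3^2\cdots x_n^{n-1}\prod_{1\leq i<j\leq n+1}(x_i-x_j)$ and both reduce $E\,p_m(x_i)p_n(x_i)$ to the quantities $E\,x_{n+1}^k\,p_n(x_{n+1})$ with $k\leq m<n$. At that point the paper stays in $n+1$ variables: the transposition $w=(m+1,n+1)$ fixes the monomial $x_{n+1}^m x_2x_3^2\cdots x_n^{n-1}$ (both variables carry exponent $m$) while negating the Vandermonde, so the symmetry of $E$ together with $E=EE_{n+1}$ forces $E\,x_{n+1}^m p_n(x_{n+1})=-E\,x_{n+1}^m p_n(x_{n+1})=0$; no explicit formula for $p_n$ is ever needed. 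You instead push $E_{n+1}$ through the Leibniz expansion to identify $p_n(t)$ with the Heine determinant $\det B_n(t)$, after which the vanishing is the statement that a determinant with two equal rows is zero --- the image, under $E$, of the paper's antisymmetry. Your route uses the hypotheses more heavily, since it needs the full factorization $E\prod_i x_i^{e_i}=\prod_i a_{e_i}$; this does follow from $E=EE_i$ and symmetry by an easy induction ($E\,x_1^{e_1}\cdots x_n^{e_n}=E\,E_n(x_1^{e_1}\cdots x_n^{e_n})=a_{e_n}\,E\,x_1^{e_1}\cdots x_{n-1}^{e_{n-1}}$), but you should record that step explicitly. In exchange you obtain the closed formula $p_n=\det B_n$, the identification of the leading coefficient with $\pm D_{n-1}$, and $E\,p_n^2=\pm D_{n-1}D_n$, which makes the connection with the classical theory of \cite{C78} completely transparent; the paper's identity $E\,p_n^2=p_{n,n}\,p_{n+1,n+1}$ with $p_{n,n}=E\,q_{(n-1,\ldots,n-1)}(\bs{x}_n)$ is the same fact in disguise. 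Finally, both treatments of the nondegeneracy statement invoke the hypothesis $\deg p_k=k$ at $k=n$ and $k=n+1$, so your appeal to the nonvanishing of the Hankel determinants $D_{n-1}$ and $D_n$ is legitimate.
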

\begin{proof}
Let $n,m\in\N$ with $0\leq m < n$. Since $E=E\,E_{n+1}$ the, by comparing \eqref{q} and \eqref{p_n}, we obtain
\[E\,x_{n+1}^m\,p_{n}(x_{n+1})=E\,x_{n+1}^mx_2x_3^2\cdots x_{n}^{n-1}\prod_{1\leq i<j\leq n+1}(x_i-x_j).\]
Let $w=(n+1,m+1)\in\mathfrak{S}_{n+1}$ denote the transposition exchanging $n+1$ and $m+1$. Since $E$ is a symmetric functional then we have
\begin{multline*} E\,x_{n+1}^mx_2x_3^2\cdots x_n^{n-1}\prod_{1\leq i<j\leq n+1}(x_i-x_j)\\
=E\,x_{w(n+1)}^mx_{w(2)}x_{w(3)}^2\cdots x_{w(n)}^{n-1}\prod_{1\leq i<j\leq n+1}(x_{w(i)}-x_{w(j)}).
\end{multline*}
On the other hand, it is easily seen that
\begin{multline*} x_{w(n+1)}^mx_{w(2)}x_{w(3)}^2\cdots x_{w(n)}^{n-1}\prod_{1\leq i<j\leq n+1}(x_{w(i)}-x_{w(j)})\\
=-x_{n+1}^mx_2x_3^2\cdots x_n^{n-1}\prod_{1\leq i<j\leq n+1}(x_i-x_j).
\end{multline*}
This forces $E\,x_{n+1}^mp_{n}(x_{n+1})=-E\,x_{n+1}^mp_{n}(x_{n+1})=0$ for all $0\leq m< n$. By linearity we recover,
\[E\,p_m(x_{n+1})p_n(x_{n+1})=0 \text{ whenever }0\leq m< n.\]
Of course, the case $0\leq n<m$ is analogous so that we conclude
\[E\,p_m(x_{n+1})p_n(x_{n+1})=0 \text{ for all }n,m\in\N \text{ such that }n\neq m.\]
Besides, the symmetry of $E$ assures us that we may replace $x_{n+1}$ with any of the variables $x_i$'s.

Let $p_{n,n}$ denote the leading coefficient of $p_n(x_{n+1})$, so that we may write
\small\[E\,p_n(x_{n+1})p_n(x_{n+1})=p_{n,n} E\,x_{n+1}^n\,p_n(x_{n+1})=p_{n,n}\,EE_{n+1}\,x_{n+1}^nq_{n,n,\ldots,n,0}(\bs{x}_{n+1}).\]\normalsize
From $E=EE_{n+1}$ we obtain
\[E\,p_n(x_{n+1})p_n(x_{n+1})=p_{n,n}E\,x_2x_3^2\cdots x_{n}^{n-1}x_{n+1}^{n}\prod_{1\leq i<j\leq n+1}(x_i-x_j),\]
and finally
\[E\,p_n(x_{n+1})p_n(x_{n+1})=p_{n,n}E\,q_{(n,n,\ldots,n,n)}(\bs{x}_{n+1}).\]
Therefore, by comparing \eqref{q} and \eqref{p_n} it is easy to see that
\[p_{n,n}=E\,x_1^{n-1}x_2^{n-1}\cdots x_n^{n-1}\prod_{1\leq i<j\leq n}(x_i-x_j)=E\,q_{(n-1,n-1,\ldots,n-1,n-1)}(\bs{x}_{n}).\]
We obtain
\[E\,p_n(x_{n+1})p_n(x_{n+1})=p_{n,n}p_{n+1,n+1}.\]
so hence
\[E\,p_n(x_{n+1})p_n(x_{n+1})\neq 0 \text{ for all }n\in\N,\]
whenever $p_{n,n}\neq 0$ for all $n\geq 1$.
\end{proof}
Theorem \ref{thm:orth} gives us an explicit way to build up an orthogonal polynomial system associated with any linear functional $L\colon\mathbb{R}[t]\to\mathbb{R}$, provided it exists. In fact, define $E\colon \mathbb{R}[\bs{x}]\to\mathbb{R}$ to be the unique symmetric functional such that
\[L\,t^k=E\,x_i^k \text{ for all }i,k\in\N, i\neq 0.\]
One can easily check that $E=EE_i$ for all $i\in\N$ with $i\neq 0$. Thus we may define the polynomial sequence $\{p_n(t)\}_{n\in\N}$ such that $p_0(t)=1$ and satisfying \eqref{p_n}. Theorem above assures us that, if $\deg\,p_n=n$ for all $n\in\N$, then we have
\[L\,p_n(t)p_n(t)=E\,p_n(x_{n+1})p_n(x_{n+1})=0 \text{ if and only if }n\neq m,\]
and thus $\{p_n(t)\}_{n\in\N}$ is an orthogonal polynomial system associated with $L$. In turn, this means that the following combinatorial
description of orthogonal polynomial systems can be given.
\begin{thm}[A combinatorial formula for orthogonal polynomials]
Assume that $\{p_n(t)\}_{n\in\N}$ is an orthogonal polynomial system with respect to some linear functional $L$, then we have
\[p_{n}(t)=\sum_{\beta\leq (n,n,\ldots,n,0)}\bigg(\sum_{\ell_3} (-1)^{\ell_3}\bigg)a_{\beta_1}a_{\beta_2}\cdots a_{\beta_{n}}t^{\beta_{n+1}},\]
where $a_i=L\,t^i$ denotes the $i$th oment of $L$, and where $\ell_3$ ranges over all lengths of all square free decompositions of the unique $\lambda\in P_{n+1}$ such that $\beta=T^\lambda(n,n,\ldots,n,0)$.
\end{thm}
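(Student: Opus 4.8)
The plan is to combine the combinatorial formula for $q_\alpha(\bs{x}_n)$ established just before Theorem~\ref{thm:orth} with the orthogonality machinery of that theorem. First I would recall that, by the displayed identity preceding the statement, for any symmetric functional $E$ built from $L$ via $L\,t^k=E\,x_i^k$, and in particular with $\alpha=(n,n,\ldots,n,0)\in\N^{n+1}$, the defining relation \eqref{p_n} reads $p_n(x_{n+1})=E_{n+1}\,q_{(n,\ldots,n,0)}(\bs{x}_{n+1})$. Since the orthogonal polynomial system attached to $L$ is unique (up to the normalization $p_0=1$), and Theorem~\ref{thm:orth} shows the sequence defined by \eqref{p_n} is orthogonal with respect to $L$, the polynomial $p_n(t)$ in the statement coincides with $E_{n+1}\,q_{(n,\ldots,n,0)}(\bs{x}_{n+1})$.

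Next I would substitute the explicit combinatorial expansion of $q_\alpha(\bs{x}_n)$. For $\mathcal{G}=\mathcal{L}_{n+1}$ we have, from the formula displayed right after $\hat{\tau}_n^{-1}$ is expanded,
\[q_{(n,\ldots,n,0)}(\bs{x}_{n+1})=\sum_{\beta\leq (n,\ldots,n,0)}\bigg(\sum_{\ell_3}(-1)^{\ell_3}\bigg)x^\beta,\]
where $\beta$ ranges over configurations in $\N^{n+1}$ dominated by $(n,\ldots,n,0)$, the inner sum is over lengths $\ell_3$ of square free decompositions of the unique $T^\lambda$ with $T^\lambda(n,\ldots,n,0)=\beta$, and $\lambda\in P_{n+1}$. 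Applying $E_{n+1}$ term by term, and using $E_{n+1}\,\bs{x}_{n+1}^\beta=x_{n+1}^{\beta_{n+1}}\,E\,x_1^{\beta_1}\cdots x_n^{\beta_n}$, together with the independence hypothesis which gives $E\,x_1^{\beta_1}\cdots x_n^{\beta_n}=\prod_{i=1}^n E\,x_i^{\beta_i}=\prod_{i=1}^n a_{\beta_i}=a_{\beta_1}a_{\beta_2}\cdots a_{\beta_n}$, one obtains exactly
\[p_n(t)=\sum_{\beta\leq (n,\ldots,n,0)}\bigg(\sum_{\ell_3}(-1)^{\ell_3}\bigg)a_{\beta_1}a_{\beta_2}\cdots a_{\beta_n}t^{\beta_{n+1}},\]
after renaming the formal variable $x_{n+1}$ as $t$.

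A couple of points need care rather than genuine difficulty. One must check that every $\beta$ appearing has $\beta\in\N^{n+1}$ (so that $a_{\beta_i}$ makes sense with nonnegative indices), which is automatic since toppling dominance with $\lambda\in P_{n+1}$ keeps us inside $\N^{n+1}$, and that the factorization $E\,x_1^{\beta_1}\cdots x_n^{\beta_n}x_{n+1}^{\beta_{n+1}}$ separates off the $x_{n+1}$ power correctly — this is precisely what the conditional operator $E_{n+1}$ and the independence relation $E=EE_{n+1}$ encode. I would also remark explicitly that the hypothesis "$\{p_n(t)\}$ is an orthogonal polynomial system" forces $\deg p_n=n$, hence $p_{n,n}\neq 0$, so the sequence produced by \eqref{p_n} is nondegenerate and, by uniqueness of monic-type orthogonal systems for a given functional, agrees with the given one.

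The main obstacle, such as it is, is purely bookkeeping: one must be scrupulous that the index set "$\beta\leq(n,\ldots,n,0)$ in $P_{n+1}$" is the same in the formula for $q_\alpha$ and in the final statement, and that the square free decompositions counted by $\ell_3$ are those of $T^\lambda$ for the correct $\lambda$, namely the unique $\lambda\in P_{n+1}$ with $T^\lambda(n,\ldots,n,0)=\beta$ guaranteed by Theorem~\ref{thm:Yam_main}. No new ideas beyond assembling Theorem~\ref{thm:orth}, the combinatorial expansion of $q_\alpha$, and the definition of $E$ from $L$ are required.
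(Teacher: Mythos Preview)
Your approach is essentially the one the paper intends (the paper gives no separate proof, treating the theorem as immediate from the paragraph preceding it): build the symmetric functional $E$ from $L$, use \eqref{p_n} and Theorem~\ref{thm:orth} to identify $p_n$, then plug in the square-free expansion of $q_{(n,\ldots,n,0)}(\bs{x}_{n+1})$ and apply $E_{n+1}$ termwise.

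One of your side claims is wrong, however. Toppling dominance does \emph{not} keep configurations inside $\N^{n+1}$: already for $n=1$ on $\mathcal{L}_2$ one has $T_{[1]}^2(1,0)=(-1,2)$, so $(-1,2)\leq(1,0)$. The correct reason no negative indices $\beta_i$ actually matter is that the only $\beta$ contributing a nonzero inner sum are those reachable from $(n,\ldots,n,0)$ by a \emph{square-free} product of the $T_{[i,j]}$'s; for $1\leq i\leq n$ there are exactly $n+1-i$ generators with first index $i$, so a square-free product decreases coordinate $i$ by at most $n+1-i\leq n=\alpha_i$, and coordinate $n+1$ is only ever increased. Thus every $\beta$ with nonvanishing coefficient lies in $\N^{n+1}$ and the moments $a_{\beta_i}$ make sense. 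With this correction the argument goes through as you describe.
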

By comparing this combinatorial formula with Theorem \ref{HLcomb} one realizes the strong analogy between the expansion of an orthogonal polynomial in terms of the moments of the associated linear functional, and the expansion of a Hall-Littlewood symmetric polynomials in terms of the Schur functions. One might go a bit more into this analogy by considering more general families of graphs $\{\mathcal{G}_n\}_{n\geq 1}$, with $\mathcal{G}_n$ having $n$ vertices and $\mathcal{G}_{n+1}$ obtained from $\mathcal{G}_n$ by adding a vertex $v_{n+1}$ and a certain number of edges. Hence, analogues of equations \eqref{q} and \eqref{p_n} can be given for a general $\mathcal{G}_{n+1}$, with $\hat{\tau}$ possibly replaced by $\hat{\tau}(z_1,z_2,z_3,q)$. Thus, a polynomial sequence $\{p_n(z_1,z_2,z_3,q;t)\}_{n\in\N}$ associated with any family $\{\mathcal{G}_n\}_{n\geq 1}$ is obtained. It reduces to classical orthogonal polynomial systems when $\mathcal{G}_n=\mathcal{L}_n$ and when $z_1=z_2=z_3=q=1$. This opens the way toward a general combinatorial theory for the analogues of the classical orthogonal polynomials, as well as of the classical symmetric functions, defined starting from a general family of graphs $\{\mathcal{G}_n\}_{n\geq 1}$.
\begin{rem}
The coding of orthogonal polynomials via symmetric functionals is at root of a deep connection among orthogonal polynomial systems and the invariant theory of binary forms. More on this subject, including a treatment of multivariable orthogonal polynomials, can be found in \cite{PSS14}.
\end{rem}
\end{document}